\newtheorem{definition}{Definition} 
\newtheorem{proposition}[definition]{Proposition} 
\newtheorem{theorem}[definition]{Theorem} 
\newtheorem{lemma}[definition]{Lemma} 
\newtheorem{corollary}[definition]{Corollary}
\newcommand{\RR}{\mathbb{R}}
\newcommand{\ZZ}{\mathbb{Z}}
\renewcommand{\dim}{\mathsf{dim}\ }
\newcommand{\conv}{\operatorname{conv}}
\newcommand{\cone}{\operatorname{cone}}
\newcommand{\lev}{\operatorname{lev}}
\newcommand{\Lev}{\operatorname{Lev}}
\renewcommand{\deg}{\operatorname{deg}}
\renewcommand{\dim}{\operatorname{dim}}
\renewcommand{\min}{\operatorname{min}}
\newcommand{\fract}{\operatorname{frac}}
\newcommand{\integ}{\operatorname{int}}
\newcommand{\relint}{\operatorname{relint}}
\def\D{\Delta}
\def\D'{\Delta}
\title{Ehrhart $f^*$-coefficients of polytopal complexes are
non-negative integers}
\author{Felix Breuer}
\address{Department of Mathematics, San Francisco State University, 1600 Holloway Ave, San Francisco, CA 94132, USA}
\thanks{The author was supported by the DFG (Deutsche Forschungsgemeinschaft) grant BR 4251/1-1.}
\keywords{Ehrhart theory, $f^*$-vector, $h^*$-vector, Ehrhart polynomial, counting interpretation, non-negativity, partial polytopal complex, simplicial complex, discrete cone}
\subjclass[2010]{52B20, 52B70, 05A10, 05A15, 05E45, 11C08}
\begin{document}
\maketitle

\begin{abstract}
The Ehrhart polynomial $L_P$ of an integral polytope $P$ counts the
number of integer points in integral dilates of $P$. Ehrhart
polynomials of polytopes are often described in terms of their Ehrhart $h^*$-vector
(aka Ehrhart $\delta$-vector), which is the vector of coefficients of
$L_P$ with respect to a certain binomial basis and which coincides
with the $h$-vector of a regular unimodular triangulation of $P$ (if
one exists). One important result by Stanley about $h^*$-vectors of polytopes is that their entries are always non-negative. However, recent combinatorial applications of Ehrhart theory give rise to polytopal complexes with $h^*$-vectors that have negative entries.

In this article we introduce the Ehrhart $f^*$-vector of polytopes or, more generally, of polytopal complexes $K$. These are again coefficient vectors of $L_K$ with respect to a certain binomial basis of the space of polynomials and they have the property that the $f^*$-vector of a unimodular simplicial complex coincides with its $f$-vector. The main result of this article is a counting interpretation for the $f^*$-coefficients which implies that $f^*$-coefficients of integral polytopal complexes are always non-negative integers. This holds even if the polytopal complex does not have a unimodular triangulation and if its $h^*$-vector does have negative entries. Our main technical tool is a new partition of the set of lattice points in a simplicial cone into discrete cones. Further results include a complete characterization of Ehrhart polynomials of integral partial polytopal complexes and a non-negativity theorem for the $f^*$-vectors of rational polytopal complexes. 
\end{abstract}


\section{Introduction}

For any set $X\subset\RR^n$ the \emph{Ehrhart function}
$L_X(k)=\ZZ^n\cap kX$ counts the number of lattice points in the
$k$-th dilate of $X$ for $1\leq k\in\ZZ$. Ehrhart's theorem states that if $P$ is a
lattice polytope then $L_P(k)$ is a polynomial in $k$ and, by
induction, the same holds for polytopal complexes with integral
vertices. \cite{Beck2007,Ehrhart1962,EugeneEhrhart1977}

Recently, a number of articles have appeared that realize various
combinatorial counting polynomials as Ehrhart functions of suitable
polytopal complexes and then apply results from Ehrhart theory to
prove theorems about these combinatorial functions.
\cite{Beck2006a,Beck2006,Breuer2009,Lisonek2007} In particular,
it is possible to obtain bounds on the coefficients of these
polynomials in this way. \cite{Breuer} For this purpose, the coefficients with respect to the monomial basis are not always easiest to work with. There are other bases of polynomial space that give rise to coefficient vectors such as the $h^*$- and $f^*$-vectors that are more amenable to analysis. These are defined as follows.

Let $p(k)$ be a polynomial in $k$ of degree at most $d$. Then there exist coefficients $h^*_i$ and $f^*_i$ for $i=0,\ldots,d$ such that 
\begin{eqnarray}
p(k) = \sum_{i=0}^d h^*_i {k+d-i \choose d} = \sum_{i=0}^d f^*_i {k-1 \choose i}. 
\end{eqnarray}
The coefficients $h^*_i$ and $f^*_i$ depend both on $p$ and on $d$, so
we will sometimes write $h^*_i(p,d)$ and $f^*_i(p,d)$ to make this
dependency explicit. The vectors $(h^*_0,\ldots,h^*_d)$ and $(f^*_0,\ldots,f^*_d)$ are
called the \emph{$h^*$- and $f^*$-vectors} of $p$ and their entries
are the \emph{$h^*$- and $f^*$-coefficients} of $p$, respectively.
Note that the $h^*$-vector also goes by the name of Ehrhart
$\delta$-vector. \cite{Stapledon2009} Whenever we refer to the $h^*$-
or $f^*$-vector of an integral polytope or polytopal complex $P$, we mean the $h^*$-
or $f^*$-vector of its Ehrhart polynomial $L_P$. For more details on these vectors and, most importantly, the motivation for defining them we refer the reader to Section~\ref{sec:prelim-f-and-h}.

One famous result about $h^*$-vectors is Stanley's theorem which asserts that the $h^*$-coefficients of the Ehrhart
polynomial of an integral polytope are always non-negative integers.
\cite{Stanley1980} Behind this theorem lies a beautiful interpretation, due to Ehrhart, of the $h^*$-coefficients of the Ehrhart polynomial of a simplex $\Delta$ as counting lattice points at various heights in the fundamental parallelepiped of the cone over the homogenization of $\Delta$. \cite{Ehrhart1962,EugeneEhrhart1977}

While $h^*$-vectors of integral polytopes are always non-negative, $h^*$-vectors of integral polytopal complexes may well have negative entries. Moreover, polytopal complexes with negative $h^*$-coefficients appear in natural combinatorial applications. For example, coloring complexes of uniform hypergraphs can have negative $h^*$-coefficients. Their $f^*$-vector, however, is always non-negative. See Section~\ref{sec:prelim-col-comp} and \cite{Breuer2000} for details.

Thus, the question arises whether this is always true: Do polytopal
complexes always have a non-negative integral $f^*$-vector? The
purpose of this article is to give a positive answer to this question.

Our main result is a counting interpretation of the
$f^*$-vector of a simplex $\Delta$, in the spirit of the classic counting
interpretation of the $h^*$-vector of a simplex. Given a relatively
open lattice simplex $\Delta$, the $f^*$-vector counts the number of
so-called atomic lattice points at different heights in the
fundamental simplex of the cone over the homogenization of $\Delta$.
More precisely:

\begin{theorem}
\label{thm:counting-interpretation}
Let $\Delta\subset\mathbb{R}^n$ be an open lattice
simplex, let $d'\geq d=\text{dim}(\Delta)$ and let
$f^*(L_\Delta,d')=(f^*_0,\ldots,f^*_{d'})$. Then $f^*_i$ counts the number of
atomic lattice points in the half-open fundamental simplex of
$\cone_{\mathbb{R}}(\Delta\times\{1\})$ at level $i+1$.
\end{theorem}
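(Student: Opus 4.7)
The plan is to apply the new partition of the lattice points of the simplicial cone $C = \cone_{\R}(\Delta\times\{1\})$ into discrete cones developed earlier in the paper, and to observe that the height-$k$ lattice points inside each discrete cone are counted by a single term of the form $\binom{k-1}{\ell-1}$.

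Let $w_j = (v_j, 1)$ denote the lattice generators of $C$, where $v_0,\ldots,v_d$ are the vertices of $\Delta$. Since $\Delta$ is relatively open, $L_\Delta(k)$ equals the number of lattice points of $\relint C$ whose last coordinate (``height'') equals $k$. The first step is to invoke the partition theorem: the relevant lattice points of $C$ are partitioned into discrete cones, each indexed by an atomic lattice point $p$ in the half-open fundamental simplex. By the structural definition of a discrete cone, an atomic point $p$ at level $\ell$ generates a piece of the form $\{p + \sum_{j \in S_p} a_j w_j : a_j \in \Z_{\geq 0}\}$ where $S_p \subseteq \{0,\ldots,d\}$ has size $\ell$, and $p$ itself lies at height $\ell$.

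The second step is the counting. A point $p + \sum_{j \in S_p} a_j w_j$ has height $k$ precisely when $\sum_{j \in S_p} a_j = k - \ell$, and the number of non-negative integer solutions to this equation is $\binom{k-1}{\ell-1}$ by stars and bars. Summing over all atomic points and grouping them by level gives
$$
L_\Delta(k) \;=\; \sum_{\ell \geq 1}\bigl|\{\text{atomic points at level }\ell\}\bigr|\binom{k-1}{\ell-1}.
$$
Reindexing with $\ell = i+1$ produces exactly the $f^*$-expansion, so uniqueness of coefficients with respect to the basis $\{\binom{k-1}{i}\}_{i=0}^{d'}$ identifies $f^*_i$ with the count of atomic points at level $i+1$.

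Finally, for the extension to $d' > d$ one notes that the half-open fundamental simplex has heights bounded by $d+1$, so there are no atomic points at levels $>d+1$, matching the forced zeros $f^*_i = 0$ for $i > d$. The main obstacle is verifying the first step: the precise shape of a discrete cone over an atomic point at level $\ell$, and in particular the fact that it is a free non-negative-integer span of exactly $\ell$ of the generators $w_j$ with $p$ sitting at height $\ell$. This is the substantive content of the earlier partition construction, and once it is in place, the counting and identification with the $f^*$-coefficients are a routine stars-and-bars calculation followed by comparison of coefficients.
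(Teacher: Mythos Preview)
Your proposal is correct and follows essentially the same approach as the paper: invoke Theorem~\ref{thm:partition} to partition the lattice points of $\relint(C)$ into translated discrete cones, count the height-$k$ points in each piece by stars-and-bars to get $\binom{k-1}{\ell-1}$, and read off the $f^*$-coefficients. One small inaccuracy: in the paper's partition the discrete cone attached to an atomic point at level $\ell$ is generated by the \emph{first} $\ell$ generators $a_1,\ldots,a_\ell$ (in a fixed ordering), not by an arbitrary subset $S_p$ of size $\ell$; this does not affect your counting, since only $|S_p|=\ell$ enters the stars-and-bars step.
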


The definitions of the fundamental simplex, atomic lattice points and
their level are given in Section~\ref{sec:partition}. An open lattice
simplex is the relative interior of a lattice simplex.

From this counting interpretation we can immediately obtain a complete
characterization of the $f^*$-vectors of
integral partial polytopal complexes. Here, an integral partial
polytopal complex is any set that can be written as the disjoint union
of relatively open polytopes with integral vertices. 

\begin{theorem}
\label{thm:characterization}
A vector is the $f^*$-vector of some integral
partial polytopal complex if and only if it is integral and
non-negative.
\end{theorem}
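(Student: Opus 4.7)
The proof plan is to establish the two directions separately, with both following cleanly from Theorem~\ref{thm:counting-interpretation} together with one explicit computation.

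\textbf{Necessity.} Let $K$ be an integral partial polytopal complex, written as a disjoint union of relatively open lattice polytopes $P_1,\ldots,P_r$. My plan is to refine this decomposition into a disjoint union of relatively open lattice simplices. Concretely, each closed polytope $\overline{P_j}$ admits a triangulation $T_j$ whose vertices are lattice points (for instance, a pulling triangulation), and then the relatively open polytope $P_j$ is the disjoint union of the relative interiors of those faces of $T_j$ that lie in $\operatorname{relint}(\overline{P_j})$. Combining these decompositions, we can write $K$ as a disjoint union of open lattice simplices $\Delta_1,\ldots,\Delta_s$. Because the Ehrhart function is additive on disjoint unions and, for any fixed ambient dimension $d'\geq\dim K$, the $f^*$-coefficients are a linear functional of the Ehrhart polynomial, we have
\begin{equation*}
f^*_i(L_K,d') \;=\; \sum_{j=1}^s f^*_i(L_{\Delta_j},d').
\end{equation*}
By Theorem~\ref{thm:counting-interpretation}, each summand is a non-negative integer, and hence so is $f^*_i(L_K,d')$.

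\textbf{Sufficiency.} Here the key computation is to exhibit, for each $i\geq 0$, an open lattice simplex whose Ehrhart polynomial is exactly $\binom{k-1}{i}$. I claim that the open standard $i$-simplex
\begin{equation*}
\Sigma_i \;=\; \{x\in\R^i : x_j>0,\; x_1+\cdots+x_i<1\}
\end{equation*}
does the job. Indeed, lattice points in $k\Sigma_i$ correspond bijectively, via $y_j=x_j-1$, to nonnegative integer solutions of $y_1+\cdots+y_i\leq k-1-i$, and counting these yields $\binom{k-1}{i}$. Consequently, in any ambient dimension $d'\geq i$ the $f^*$-vector of $\Sigma_i$ is the standard basis vector with a $1$ in position $i$.

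Given an arbitrary non-negative integer vector $(f^*_0,\ldots,f^*_{d'})$, I then plan to assemble the desired complex as a disjoint union (inside a sufficiently high-dimensional ambient lattice, or inside $\R^{d'}$ by shifting along a coordinate axis by large integer vectors) of $f^*_i$ integer translates of $\Sigma_i$ for each $i=0,\ldots,d'$. Integer translations preserve both integrality and the Ehrhart polynomial, so by additivity on disjoint unions the resulting integral partial polytopal complex has Ehrhart polynomial $\sum_i f^*_i\binom{k-1}{i}$, whose $f^*$-vector with respect to $d'$ is exactly $(f^*_0,\ldots,f^*_{d'})$.

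\textbf{Main obstacle.} The only non-mechanical ingredient is the decomposition step in the necessity direction: I must invoke that every lattice polytope admits a triangulation into lattice simplices and that this triangulation induces a decomposition of its relative interior into relatively open lattice simplices. This is classical, so the theorem should follow essentially immediately from Theorem~\ref{thm:counting-interpretation} and the explicit Ehrhart computation for $\Sigma_i$.
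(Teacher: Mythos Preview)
Your proposal is correct and follows essentially the same route as the paper: for necessity, decompose the partial complex into open lattice simplices, use additivity of the $f^*$-vector, and apply Theorem~\ref{thm:counting-interpretation}; for sufficiency, realize each basis vector by an open unimodular $i$-simplex (your $\Sigma_i$) and take disjoint unions. The paper additionally stresses that the argument works because $f^*_i(\sigma_j,\dim\sigma_j)=f^*_i(\sigma_j,d)$, i.e., the $f^*$-coefficients are stable under increasing the ambient dimension---you use this implicitly by fixing a common $d'$, which is fine since Theorem~\ref{thm:counting-interpretation} is already stated for arbitrary $d'\geq\dim\Delta$.
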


In particular, this gives us the desired non-negativity result for
$f^*$-vectors of polytopal complexes.

\begin{theorem} 
\label{thm:non-negative}
Every integral polytopal complex, and in particular every lattice polytope, has a non-negative integral $f^*$-vector.
\end{theorem}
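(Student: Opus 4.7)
The plan is to deduce Theorem \ref{thm:non-negative} directly from the non-negativity direction of Theorem \ref{thm:characterization}. The only step of substance is the observation that every integral polytopal complex is, in particular, an integral partial polytopal complex: a polytopal complex $K$ is a finite union of lattice polytopes glued along common faces, and it can be written canonically as the disjoint union of the relative interiors of all its faces, each of which is a relatively open lattice polytope. This exactly matches the definition of an integral partial polytopal complex given just before Theorem \ref{thm:characterization}.

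Once this identification is made, Theorem \ref{thm:characterization} immediately gives that the $f^*$-vector of $K$ is integral and non-negative. The small verification needed is that the Ehrhart polynomial $L_K$ does not depend on whether one views $K$ as a polytopal complex or as the partial polytopal complex formed by its relatively open faces. This is automatic, since $L_K(k) = |\Z^n \cap kK|$ depends only on the underlying point set $K$, and partitioning $K$ into its relatively open faces neither adds nor removes lattice points. Therefore the $f^*$-vectors with respect to any fixed $d \geq \dim K$ agree, and the non-negativity transfers.

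No real obstacle remains at this stage, since the work has been packaged into the earlier theorems. If one wished to bypass Theorem \ref{thm:characterization} and argue directly, the plan would be to triangulate each cell of $K$ into lattice simplices (no unimodularity required), partition $K$ into the resulting relatively open lattice simplices, write $L_K$ as the sum of the Ehrhart polynomials of these open simplices, and apply Theorem \ref{thm:counting-interpretation} termwise. Non-negativity would then follow from the linearity of the map $p \mapsto f^*(p,d)$ on polynomials of degree at most $d$, together with the fact that a sum of non-negative integers is a non-negative integer. The genuinely difficult step — producing a counting interpretation of the $f^*$-coefficients in the simplex case — has already been carried out in Theorem \ref{thm:counting-interpretation}, so Theorem \ref{thm:non-negative} requires no additional technical input.
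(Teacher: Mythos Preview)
Your proposal is correct and matches the paper's approach: the paper likewise deduces Theorem~\ref{thm:non-negative} immediately from Theorem~\ref{thm:characterization}, noting that any integral polytopal complex is in particular an integral partial polytopal complex. Your alternative direct route via triangulation and Theorem~\ref{thm:counting-interpretation} is exactly the argument the paper uses to prove the non-negativity direction of Theorem~\ref{thm:characterization}, so nothing new is needed either way.
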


The crucial point here is that the $f^*$-vector is non-negative and
integral even if the complex does not have unimodular triangulation
and even if its $h^*$-vector has negative entries. Note that
non-negativity of the $f^*$-vector follows automatically if the complex has a unimodular triangulation or if the
$h^*$-vector is non-negative. This means that
Theorem~\ref{thm:non-negative} gives a new result only if the complex
in question is non-convex and does not have a non-negative $h^*$-vector.
But, as we already mentioned, there are non-convex polytopal complexes with
negative $h^*$-coefficients that do appear in practical applications.

The key technical ingredient that goes into the above counting
interpretation is the following partition of the set of lattice points
in a simplical cone into ``discrete cones''. 

\begin{theorem}
\label{thm:partition}
Let $v_1,\ldots,v_d$ be linearly independent integer
vectors in $\mathbb{Z}^n$ for $n\geq d$. Then
\begin{eqnarray}
\label{eqn:partition}
 \relint(\cone_{\mathbb{R}}(v_1,\ldots,v_d)) \cap\mathbb{Z}^n = \bigcup_{z\text{ atomic}} z +\cone_{\mathbb{Z}}(v_1,\ldots,v_{\lev(z)}),
\end{eqnarray}
where the union ranges over all atomic lattice points in the half-open
fundamental simplex of
$\cone_{\mathbb{R}}(v_1,\ldots,v_d)$ and this union is disjoint.
\end{theorem}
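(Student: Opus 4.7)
The plan is to prove the partition by exhibiting, for each lattice point $y \in \relint(\cone_{\R}(v_1,\ldots,v_d)) \cap \Z^n$, a unique pair $(z(y), m(y))$ where $z(y)$ is atomic in the half-open fundamental simplex and $m(y) \in \Z^{\lev(z(y))}_{\geq 0}$ encodes the ``peeling coefficients'' in $y = z(y) + \sum_{i=1}^{\lev(z(y))} m_i(y) v_i$. Because $v_1,\ldots,v_d$ are linearly independent, every $y$ has unique real coordinates $\lambda_1,\ldots,\lambda_d > 0$ with $y = \sum \lambda_i v_i$, which makes this bookkeeping tractable.

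First I would give the explicit peeling construction. Starting from $y = \sum \lambda_i v_i$, I would subtract off an appropriate integer multiple $m_i v_i$ for each $i$, chosen so that the remaining coefficient lies in a prescribed ``reduced'' range dictated by the half-open fundamental simplex, and would argue that exactly those indices where a strictly positive reduction was performed form an initial segment of length $\lev(z(y))$, yielding the assignment $z(y) = y - \sum_{i \leq \lev(z(y))} m_i(y) v_i$. This simultaneously proves that the left-hand side of \eqref{eqn:partition} is contained in the right-hand side, because $z(y)$ lies in the open cone (the $\lambda_i$ stay strictly positive throughout peeling) and is atomic by construction.

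Next I would check the reverse containment and disjointness. For any atomic $z$ with $\lev(z) = \ell$ and any $m \in \Z^{\ell}_{\geq 0}$, the point $z + \sum_{i \leq \ell} m_i v_i$ is automatically in $\relint(\cone_{\R}(v_1,\ldots,v_d))\cap\Z^n$, since $z$ is already in the open cone and the added vectors $v_i$ are lattice vectors with non-negative coefficients. For disjointness, suppose $z + \sum_{i\leq\lev(z)} m_i v_i = z' + \sum_{i\leq\lev(z')} m'_i v_i$ with $(z,m)\neq(z',m')$. Linear independence of $v_1,\ldots,v_d$ implies the two tuples of $v_i$-coordinates must agree; then applying the atomicity condition (which pins down the reduced coordinate of $z$ uniquely in its prescribed range) forces $z = z'$ and $m = m'$, contradicting the assumption.

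The main obstacle I expect is precisely calibrating the ``reduced range'' and the level $\lev$ so that three things are simultaneously true: the atomic representative lives inside the half-open fundamental simplex (not merely the fundamental parallelepiped), the peeling recovers exactly the first $\lev(z)$ generators $v_1,\ldots,v_{\lev(z)}$ (matching the form $\cone_{\Z}(v_1,\ldots,v_{\lev(z)})$ on the right-hand side), and every interior lattice point is hit exactly once. Once the definitions from the partition section are in place, the verification reduces to tracking coordinates under linear independence, but it is the interplay between the simplex boundary conditions and the choice of which $v_i$ are ``frozen'' into $z$ versus ``free'' in the added cone that requires the most care.
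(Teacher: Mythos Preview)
Your overall plan---peel each lattice point down to an atomic representative and then argue uniqueness via the $v$-coordinates---is exactly the shape of the paper's argument. But the specific mechanism you describe has a real gap, and it is precisely the one you flag in your last paragraph without resolving.

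The issue is the phrase ``subtract off an appropriate integer multiple $m_i v_i$ for each $i$, chosen so that the remaining coefficient lies in a prescribed reduced range.'' If you reduce every coordinate $\lambda_i$ into a fixed interval (say $(0,1]$), there is no reason the indices with $m_i>0$ should form an initial segment: take $\lambda=(0.5,\,2.5,\,0.5)$, where the only reduction is at $i=2$. The paper's definition of atomic does \emph{not} put all coordinates in a fixed range; it requires $\mu_j\leq 1$ only for $j<\lev(\mu)$, while $\mu_j$ for $j\geq\lev(\mu)$ is unconstrained (indeed, $\mu_j=\lambda_j$ there). The level of the target $\mu$ is not known in advance, so the peeling cannot be done coordinate-by-coordinate. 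The paper handles this with a sequential procedure (Lemma~\ref{lem:atomicity}, part~(\ref{itm:construction})): at each step subtract $e_{\deg(\lambda^i)}$, where $\deg$ is the first index with coordinate $>1$, and stop when $\deg\geq\lev$. The monotonicity $\deg(\lambda^{i-1})\geq\deg(\lambda^i)$ together with the stopping rule is what forces the support of $\lambda-\mu$ into $\{1,\ldots,\lev(\mu)\}$.

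Your disjointness sketch has the same problem. Saying atomicity ``pins down the reduced coordinate of $z$ uniquely in its prescribed range'' presumes a uniform range, but the constraint on $\mu_j$ depends on whether $j$ is below, at, or above $\lev(\mu)$, and the two atomic candidates might have different levels. The paper's proof splits on $\lev(\alpha)<\lev(\beta)$ versus $\lev(\alpha)=\lev(\beta)$; in the equal-level case the coordinate at index $\lev(\alpha)$ is pinned down not by a range condition but by the inequality $|\sum_j\alpha_j-\sum_j\beta_j|<1$ combined with integrality of $\alpha_i-\beta_i$. You will need both the characterization $\text{atomic}\Leftrightarrow\deg(\lambda)\geq\lev(\lambda)$ and this level-based case split to close the argument.
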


Here $\lev(z)$ denote the level of $z$ and $\cone_{\mathbb{R}}(v_1,\ldots,v_d)$ refers to all
non-negative linear combinations of the $v_i$ whereas
$\cone_{\mathbb{Z}}(v_1,\ldots,v_d)$ refers to all non-negative,
integral linear combinations of the $v_i$. Again, we refer to
Sections~\ref{sec:preliminaries} and \ref{sec:partition} for details.

Theorem~\ref{thm:partition} is much more general then necessary for
Theorems~\ref{thm:counting-interpretation}, \ref{thm:characterization}
and \ref{thm:non-negative} and is the main technical result of this
article. In particular, Theorem~\ref{thm:partition} can be used to
obtain a counting interpretation and a non-negativity theorem in the rational case.

\begin{theorem}
\label{thm:fstar-count-rational}
Let $\Delta\subset\mathbb{R}^n$ be an open rational simplex, let $d'\geq d=\text{dim}(\Delta)$ and $m$ be a positive integer such that $m\Delta$ is integral. There exist polynomials $p_0,\ldots,p_{m-1}$ such that for all integers $k\geq0$ and $0 \leq l < m$ with $(k,l)\not=(0,0)$ the Ehrhart function of $\Delta$ satisfies $L_\Delta(km+l)=p_l(k)$. Then for all $0\leq i\leq d$ and all $0\leq l< m$ the $f^*$-coefficient $f^*_i(p_{l},d)$ counts the number of atomic lattice points $z$ in the half-open fundamental simplex of $\cone_{\mathbb{R}}(\Delta\times\{m\})$ at level $i+1$ with $z_{n+1}=im+l+ 1$.
\end{theorem}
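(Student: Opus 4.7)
The plan is to adapt the proof of Theorem~\ref{thm:counting-interpretation} to the rational setting, using Theorem~\ref{thm:partition} applied to an integer homogenization. Let $v_1, \ldots, v_{d+1}$ be the vertices of $\Delta$ and set $w_j := (m v_j, m) \in \mathbb{Z}^{n+1}$. Since $m\Delta$ is integral, the $w_j$ are linearly independent integer vectors generating the cone $C := \cone_\mathbb{R}(\Delta \times \{m\})$. A lattice point $(x, t) \in \mathbb{Z}^{n+1}$ lies in $\relint(C)$ exactly when $x \in t\Delta$, so the number of lattice points of $\relint(C) \cap \mathbb{Z}^{n+1}$ at last coordinate $t$ is precisely $L_\Delta(t)$.

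Applying Theorem~\ref{thm:partition} to the $w_j$ gives the disjoint partition
\[
\relint(C) \cap \mathbb{Z}^{n+1} = \bigsqcup_{z \text{ atomic}} z + \cone_\mathbb{Z}(w_1, \ldots, w_{\lev(z)}).
\]
Since each $w_j$ has last coordinate $m$, the lattice points of $z + \cone_\mathbb{Z}(w_1, \ldots, w_r)$ at height $t$ correspond bijectively to tuples $(c_1, \ldots, c_r) \in \mathbb{Z}_{\geq 0}^r$ with $c_1 + \cdots + c_r = (t - z_{n+1})/m$; this is nonzero only if $t \equiv z_{n+1} \pmod{m}$ and $t \geq z_{n+1}$, in which case there are $\binom{(t - z_{n+1})/m + r - 1}{r - 1}$ such tuples.

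Substituting $t = km + l$ for $k \geq 0$, $0 \leq l < m$ with $(k,l) \neq (0,0)$, we obtain $p_l(k) = L_\Delta(km+l)$ as a sum of binomial coefficients, one per atomic $z$ with matching height residue. By the half-open fundamental simplex construction in Section~\ref{sec:partition}, atoms at level $r$ have heights $z_{n+1}$ in $\{(r-1)m + 1, \ldots, rm\}$ and therefore admit a unique expression $z_{n+1} = (r-1)m + l + 1$ with $l \in \{0, \ldots, m-1\}$. The final step is to verify that the binomial contribution of each such atom, when expanded in the $\binom{k-1}{i}$-basis with $i = r-1$, reduces to exactly $\binom{k-1}{i}$ in the expansion of the corresponding $p_l(k)$; reading off coefficients then identifies $f^*_i(p_l, d)$ with the number of atoms at level $i+1$ with $z_{n+1} = im + l + 1$.

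The principal obstacle is this last reduction: one must show that each atom contributes exactly one basis element $\binom{k-1}{i}$ to a single quasi-polynomial $p_l$, with no spillover to neighboring coefficients or other residue classes. This depends crucially on the precise definitions of atomic lattice points and levels in Section~\ref{sec:partition}, which are engineered so that the $m$ possible heights at level $r$ match up bijectively with the $m$ residue classes indexing the quasi-polynomials. Once this alignment is in hand, reading off the coefficient of $\binom{k-1}{i}$ in each $p_l(k)$ completes the proof, and specializing to $m = 1$ recovers Theorem~\ref{thm:counting-interpretation}.
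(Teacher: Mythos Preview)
Your approach is essentially the same as the paper's: both lift to the cone $C=\cone_{\mathbb{R}}(\Delta\times\{m\})$ via the integral generators $w_j=(mv_j,m)$, apply Theorem~\ref{thm:partition}, and use that atoms at level $i+1$ have last coordinate in $\{im+1,\ldots,(i+1)m\}$ while elements of the discrete cone have last coordinate divisible by $m$, so the atoms sort by residue class into the $m$ constituents of the quasipolynomial. The paper's proof is terser---it records these two observations and says ``the theorem then follows''---whereas you spell out the binomial count and explicitly flag the final coefficient-matching step as the crux; in both cases that final verification is left implicit.
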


This counting interpretation yields a non-negativity theorem for the
$f^*$-vector just as in the integral case. The $f^*$-vector of a
rational polytopal complex is the vector of all numbers
$f^*_i(p_l,d)$ for $i=0,\ldots,d$ and $0\leq l < m$, see Section~\ref{sec:rational-case} for details.

\begin{theorem}
\label{thm:non-negativity-rational}
Any rational partial polytopal complex has a non-negative integral $f^*$-vector.
\end{theorem}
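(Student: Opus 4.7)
The plan is to mirror the deduction of Theorem~\ref{thm:non-negative} from Theorem~\ref{thm:counting-interpretation}, but using the rational counting interpretation of Theorem~\ref{thm:fstar-count-rational} in place of its integral version. The first step is to write any rational partial polytopal complex $K$ as a disjoint union of relatively open rational simplices $\Delta_1,\ldots,\Delta_N$. This is done by picking a triangulation of the closure of each maximal face of $K$ into rational simplices (no integrality of vertices is needed, only rationality), and then stratifying into relative interiors of simplices so that the union becomes disjoint. Then I would choose a common denominator $m\in\NN$ such that $m\Delta_j$ is integral for every $j$; this is the $m$ that will index the residue classes in the rational $f^*$-vector of $K$.

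Next, I would spell out additivity. Because the decomposition is disjoint, $L_K=\sum_j L_{\Delta_j}$, and on each residue class $k\mapsto km+l$ with $(k,l)\neq(0,0)$ the functions $L_{\Delta_j}$ are polynomials $p_{j,l}$ of degree at most $d=\dim(K)$, so $L_K(km+l)=\sum_j p_{j,l}(k)$ is itself a polynomial $p_l$ of degree at most $d$. Since the map $p\mapsto(f^*_0(p,d),\ldots,f^*_d(p,d))$ is linear in $p$, it follows that
\begin{equation*}
 f^*_i(p_l,d)=\sum_{j=1}^N f^*_i(p_{j,l},d)
\end{equation*}
for every $0\le i\le d$ and $0\le l<m$. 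By the definition of the $f^*$-vector of a rational polytopal complex given in Section~\ref{sec:rational-case}, the entries of the $f^*$-vector of $K$ are precisely the numbers $f^*_i(p_l,d)$.

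Now I invoke Theorem~\ref{thm:fstar-count-rational} on each relatively open rational simplex $\Delta_j$: each summand $f^*_i(p_{j,l},d)$ equals the number of atomic lattice points at level $i+1$ in the half-open fundamental simplex of $\cone_{\RR}(\Delta_j\times\{m\})$ whose last coordinate equals $im+l+1$. This is manifestly a non-negative integer, so $f^*_i(p_l,d)$ is a sum of non-negative integers and hence a non-negative integer itself. This establishes the theorem.

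The only real obstacle I anticipate is bookkeeping rather than mathematics: verifying that one can choose a single modulus $m$ that simultaneously works for every piece of the decomposition, and that the rational $f^*$-vector of $K$ defined in Section~\ref{sec:rational-case} really is the componentwise sum of the rational $f^*$-vectors of the pieces (i.e., that the definition is set up with a fixed ambient degree $d$ and fixed $m$, so that linearity of $p\mapsto f^*_i(p,d)$ applies uniformly). Both points are routine given the framework already developed, so no new technical ingredient beyond Theorem~\ref{thm:fstar-count-rational} is required.
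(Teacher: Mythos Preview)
Your proposal is correct and follows essentially the same approach as the paper: triangulate $K$ into relatively open rational simplices with a common modulus $m$, use additivity of $L_K$ together with linearity of $p\mapsto f^*_i(p,d)$, and apply Theorem~\ref{thm:fstar-count-rational} to each piece. The paper is slightly terser (it fixes $m$ first by requiring $mK$ integral and then triangulates using only vertices of $K$), but the argument is the same.
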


Interestingly, there is another variant of
Theorem~\ref{thm:fstar-count-rational} that expresses the Ehrhart
function of a rational simplex in terms of restricted partition functions. For
our purposes the restricted partition function $p_{m_1,\ldots,m_d}(k)$ is given
by
\[ p_{m_1,\ldots,m_d}(k) = \# \left\{
(\lambda_1,\ldots,\lambda_d) \; \middle|\; 0\leq\lambda_i\in\ZZ, \sum_{i=1}^d
\lambda_i m_i = k \right\}, \]
see Section~\ref{sec:rational-case} for details. Then
Theorem~\ref{thm:partition} allows us to write the Ehrhart function of
a rational simplex in terms of restricted partition functions in the following way.

\begin{theorem}
\label{thm:fstar-count-rational-skew}
Let $\Delta\subset\mathbb{R}^n$ be an open lattice simplex with vertices $v_1,\ldots,v_{d+1}$ and let $m_1,\ldots,m_{d+1}$ be minimal positive integers such that $m_iv_i$ is integral for all $1\leq i \leq d+1$. Then 
\[
L_\Delta(k) = \sum_{i=0}^{d}\sum_{s=0}^{S} c_{i,s} \cdot p_{m_1,\ldots,m_{i+1}}(k-s)
\]
for all $1\leq k$ where $S=\sum_{i=1}^{d+1} m_i$ and $c_{i,s}$ denotes the number of atomic lattice points $z$ at level $i+1$ in the fundamental simplex of $\cone_{\RR}(a_1,\ldots,a_{d+1})$ with $z_{n+1}=s$. Here, $a_i=(m_iv_{i,1},\ldots,m_iv_{i,n},m_i)$ for all $i=1,\ldots,d+1$.
\end{theorem}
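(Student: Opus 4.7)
The plan is to apply Theorem~\ref{thm:partition} to the integer vectors $a_1,\ldots,a_{d+1}$ and then to slice the resulting decomposition by the last coordinate.

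First I would verify that Theorem~\ref{thm:partition} actually applies: each $a_i=(m_iv_{i,1},\ldots,m_iv_{i,n},m_i)$ lies in $\Z^{n+1}$ since $m_iv_i\in\Z^n$, and the $a_i$ are linearly independent because the $v_i$ are affinely independent in $\R^n$, so the vectors $(v_i,1)$ (and hence their positive multiples $a_i$) are linearly independent. Since each $a_i$ is a positive multiple of $(v_i,1)$, we have $\cone_{\R}(a_1,\ldots,a_{d+1})=\cone_{\R}((v_1,1),\ldots,(v_{d+1},1))$, and because $\Delta$ is relatively open,
\[
L_\Delta(k)=\#\{z\in\relint(\cone_{\R}(a_1,\ldots,a_{d+1}))\cap\Z^{n+1}\ :\ z_{n+1}=k\}
\]
for every $k\geq1$.

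Next I would apply Theorem~\ref{thm:partition} to partition the right-hand set as a disjoint union over atomic lattice points $z$ of the discrete cones $z+\cone_{\Z}(a_1,\ldots,a_{\lev(z)})$, and count the height-$k$ points in each piece separately. Writing $i+1=\lev(z)$ and $s=z_{n+1}$, any element of such a piece has the form $z+\sum_{j=1}^{i+1}\lambda_j a_j$ with $\lambda_j\in\Z_{\geq0}$, and its last coordinate equals $s+\sum_{j=1}^{i+1}\lambda_j m_j$. Requiring this to equal $k$ yields $\sum_j\lambda_j m_j=k-s$, so the number of valid tuples is exactly $p_{m_1,\ldots,m_{i+1}}(k-s)$ by definition of the restricted partition function.

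Summing over all atomic $z$, grouped first by level $i+1$ and then by last coordinate $s$, would then give
\[
L_\Delta(k)=\sum_{i=0}^{d}\sum_{s\geq0} c_{i,s}\cdot p_{m_1,\ldots,m_{i+1}}(k-s),
\]
which is the claimed identity. The upper bound $s\leq S$ comes from the fact that atomic points lie in the half-open fundamental simplex of $\cone_{\R}(a_1,\ldots,a_{d+1})$, so their last coordinate is bounded by $\sum_j m_j=S$; any terms introduced by extending the sum up to $s=S$ are zero. The entire argument really rides on Theorem~\ref{thm:partition}, and once that decomposition is in hand, the rest is just a careful translation between lattice points in a simplicial cone and values of restricted partition functions, so I do not anticipate any serious obstacle beyond this bookkeeping of heights.
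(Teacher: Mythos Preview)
Your proposal is correct and follows essentially the same route as the paper: apply Theorem~\ref{thm:partition} to the generators $a_i$, then slice each discrete cone $z+\cone_{\Z}(a_1,\ldots,a_{\lev(z)})$ by the hyperplane $x_{n+1}=k$ and recognize the count as $p_{m_1,\ldots,m_{i+1}}(k-s)$. The paper's proof is a one-sentence sketch of exactly this argument, so your write-up is simply a more detailed rendering of the same idea.
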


\bigskip

This paper is organized as follows. In Section~\ref{sec:preliminaries} we give some preliminary definitions, sketch a classic proof of the non-negativity of $h^*$-vectors for polytopes and give an example of a natural simplicial complex with a negative $h^*$-vector. In Section~\ref{sec:partition} we present the partition of the set of lattice points in an open simplicial cone into discrete subcones, which is the main technical result of this article. In Section~\ref{sec:fstar-counting} we use this partiton result to give a counting interpretation of the $f^*$-coefficients of a simplex, prove the non-negativity of the $f^*$-vector and give a complete characterization of the Ehrhart polynomials of integral partial polytopal complexes. Up to this point we have mainly worked with integral polytopes, to make the ideas behind the construction more transparent. However, most of our results apply to the rational case as well. In Section~\ref{sec:rational-case} we introduce $f^*$-vectors of rational polytopes, give a counting interpretation, prove the non-negativity of the $f^*$-vectors of rational partial polytopal complexes and relate Ehrhart functions of rational simplices to restricted partition functions.

\section{Preliminaries}
\label{sec:preliminaries}

\emph{Note: A comprehensive definition of all notions from polytope theory, Ehrhart theory or generating function theory that we make use of is out of scope of this article. For any undefined terms we refer the reader to \cite{Beck2007,Schrijver1986,Ziegler1997}.} 

\subsection{Geometry}

A \emph{polytope} is the convex hull of finitely many points. A
\emph{supporting hyperplane} of a polytope $P$ is a hyperplane such
that $P$ is contained in one of the two corresponding closed
half-spaces. A \emph{face} of $P$ is the intersection of a supporting
hyperplane with $P$. By convention $P$ is a face of itself as well.
The \emph{dimension} of $P$ is the dimension of its affine hull. The
faces of dimension 0 are called \emph{vertices}. 

A polytope is
\emph{integral} if all its vertices are elements of the \emph{integer
lattice} $\ZZ^n$, where $n$ is the dimension of the ambient space.
Integral polytopes are also called \emph{lattice polytopes}. Two
polytopes $P,Q$ are \emph{lattice equivalent} if there is an affine
isomorphism $\phi$ of the ambient space with $\phi(P)=Q$ that induces
a bijection on the integer lattice $\ZZ^n$.   

The \emph{relative interior} of a polytope $P$ is the interior of $P$ taken with respect
to its affine hull. We also use the term \emph{open polytope} to refer
to the relative interior of a polytope. When we speak of the faces of
an open polytope, we mean the faces of its closure. Every polytope is
the disjoint union of the relative interiors of its faces.

A \emph{simplex} is the convex hull of finitely many affinely
independent points. A simplex of dimension $d$ has exactly $d+1$
vertices. The \emph{standard simplex} $\Delta^d$ of dimension $d$ is the convex
hull of $d+1$ standard unit vectors. An integral simplex is
\emph{unimodular} if it is lattice equivalent to a standard simplex.

A \emph{polytopal complex} is a finite set of polytopes $K$ with the
following two properties: 1) If $P\in K$ and $Q$ is a face of $P$,
then $Q\in K$. 2) If $P,Q\in K$, then $P\cap Q\in K$ and $P\cap Q$ is
a face of both $P$ and $Q$. The elements of $K$ are also called
\emph{faces} of $K$. The \emph{dimension} of $K$ is the maximum
dimension of any face of $K$. The \emph{support} of $K$ is the union of
all polytopes in $K$. A polytopal complex is
\emph{integral} if all of its faces are integral.

A \emph{simplicial complex} is a polytopal complex whose faces are
simplices. A \emph{triangulation} of a set $X\subset\RR^n$ is a
simplicial complex whose support is $X$. A simplical complex is
\emph{unimodular} if all of its faces are unimodular. Note that not
all integral polytopes, not even all integral simplices, have a
unimodular triangulation.

The \emph{$f$-vector} of a
$d$-dimensional simplicial complex $K$ is the vector
$(f_0,f_1,\ldots,f_d)$ where $f_i$ is the number of $i$-dimensional
faces of $K$. The \emph{$h$-vector} of $K$ is the vector
$(h_0,\ldots,h_{d+1})$ defined in terms of the
$f$-vector via
\[
h_k  = \sum_{i=0}^k(-1)^{k-i}{d-i \choose d-k} f_{i-1}
\]
for $k=0,\ldots,d+1$ where $f_{-1}:=1$. Note that the $h$-vector has one more entry than the $f$-vector but $h_0=1$ is fixed.

\subsection{Ehrhart theory}

As mentioned in the introduction, our point of departure is Ehrhart's
theorem, which states that for any integral polytope $P\in\mathbb{R}^d$
there exists a polynomial $L_P(k)$ such that
\[\# \left( \mathbb{Z}^d\cap kP \right) = L_P(k)\] for all $1 \leq k\in\mathbb{Z}$. 

It is straightforward to see that Ehrhart's theorem carries over to polytopal complexes. However, many applications go one step further and work with ``partial'' polytopal complexes instead, where some faces are missing. In particular, inside-out polytopes are examples of half-open polytopal complexes that are widely used in combinatorial applications of Ehrhart theory. \cite{Beck2006a,Beck2006,Breuer} Let us now make precise what we mean by ``partial'' in this context. 

As defined in the previous section, a (relatively) open polytope is
the relative interior of a polytope. The vertices, faces and facets of
an open polytope are defined to be the vertices, faces and facets of
its closure. Note that thus the vertices of an open polytope are not
contained in the open polytope. An open polytope is called integral if all its vertices are integral. 

For any polytopal complex $K$, the support of $K$ is the disjoint
union of the relative interiors of all faces of $K$. This motivates the following definition: A \emph{partial polytopal complex} is a disjoint union of open polytopes. The difference between a polytopal complex and a partial polytopal complex is therefore simply that some of the relatively open faces of the polytopal complex (that would need to be included because a polytopal complex has to be closed under passing to faces) have been removed.

Two important special cases of partial polytopal complexes are the following. 

A ``half-open'' polytope is a set of the form $P\setminus
\bigcup_{i=1}^l \sigma_i$ where $P$ is a polytope and the $\sigma_i$
are faces of $P$. Every half-open polytope is the support of some
partial polytopal complex. The half-open simplices that we are going to meet in the next section are examples of this.

A \emph{relative simplicial complex} is a set of simplices of the form
$K\setminus K'$ where $K$ is a simplicial complex and $K'$ is a
subcomplex of $K$. Relative simplcial complexes can be written as
partial polytopal complexes. They appear, for example, in
Steingrímsson's construction of the coloring complex. \cite{Steingrimsson2001} Relative polytopal complexes can be defined similarly and again they can be realized as partial polytopal complexes. Inside-out polytopes are examples of relative polytopal complexes. \cite{Breuer2009d}

\subsection{$f^*$- and $h^*$-vectors}
\label{sec:prelim-f-and-h}

Let us denote by $\Delta^d_i$ the $d$-dimensional standard simplex with
$i$ open facets, i.e.,
\[\Delta^d_i := \left\{x\in\mathbb{R}^{d+1} \;\middle|\; \sum_{j=0}^dx_j = 1, x_j > 0 \text{ for } 0\leq j < i, x_j \geq 0 \text{ for } i \leq j \leq d \right\}.\]

It turns out that $L_{\Delta^d_i}(k)= {k+d-i \choose d}$ for
$i=0,\ldots,d+1$ and in particular
$L_{\Delta^d_{d+1}}(k)={k-1 \choose d}$ where
$\Delta^d_{d+1}=\relint{\Delta^d}$ is the relative interior of the
standard $d$-dimensional simplex. This has the following immediate
consequences for a $d$-dimensional integral polytopal complex $C$.

\begin{enumerate}
\item {If $C$ has a unimodular triangulation $K$, then $C$ can be written as
  a disjoint union of relatively open unimodular simplices
  $\Delta^i_{i+1}$ of varying dimension $i=0,\ldots,d$. Thus
  \[L_C(k)=\sum_{i=0}^d f^*_i { k-1 \choose i }\] where the coefficients
  $f^*_i$ count the number of $i$-dimensional relatively open unimodular
  simplices appearing in the disjoint union. In this case the $f$-vector
  of the simplicial complex $K$ coincides with the vector of
  coefficients $f^*_i$, which explains the name.}
\item If $C$ has a unimodular triangulation $K$ that can be written as a disjoint union of unimodular half-open
  simplices $\Delta^d_i$, $i=0,\ldots,d$ of fixed dimension $d$, then
  \[L_C(k)=\sum_{i=0}^d h^*_i { k+d-i \choose d }\] where the
  coefficients $h^*_i$ count the number of $i$-dimensional relatively
  open unimodular simplices appearing in the disjoint union. In particular, if $K$ is a shellable\footnote{See \cite{Ziegler1997} for the definition of shellable.} complex that is a topological ball then the $h$-vector of $K$ coincides with the
  vector of coefficients $h^*_i$, which explains the name. Note that if $K$ is not a topological ball, then $h_{d+1}$ is non-zero in general and the $h^*$- and $h$-vectors may differ.
\end{enumerate}

If $C$ does not have a unimodular triangulation, we can still define the
$h^*$- and $f^*$-vectors of $C$. In fact, we can define $h^*$- and
$f^*$-vectors for arbitrary polynomials by proceeding as sketched in the introduction.

For any integer $i$,
\[{k \choose i} = \frac{k\cdot(k-1)\cdot \ldots \cdot (k-i+1)}{i!}\] is
a polynomial in $k$ of degree $i$. Moreover, both
\[\left\{ {k-1 \choose i} \;\middle|\; i=0,\ldots,d \right\} \text{ and } \left\{ {k + d -i \choose d} \;\middle|\; i=0,\ldots,d \right\}\]
form bases of the vector space of polynomials in $k$ of degree at most
$d$. Thus, for any non-negative integer $d$ and any polynomial $p(k)$ of
degree at most $d$ we can define vectors $f^*(p,d)=(f^*_0,\ldots,f^*_d)$
and $h^*(p,d)=(h^*_0,\ldots,h^*_d)$ by
\begin{eqnarray*}p(k) &=& \sum_{i=0}^d f^*_i {k-1 \choose i} \\ p(k) &=& \sum_{i=0}^d h^*_i {k+d-1 \choose d}.\end{eqnarray*}

We call $f^*(p,d)$ the $f^*$-vector of $p$ and the numbers $f^*_i$ the
$f^*$-coefficients of $p$. Similarly, we call $h^*(p,d)$ the
$h^*$-vector of $p$ and the numbers $h^*_i$ the $h^*$-coefficients of
$p$.

At this point, it important to call attention to the following subtlety:
$h^*$ depends on the choice of $d$, whereas $f^*$ does not. More
precisely, the $f^*$-vector has the following property. Let $p$ be any
polynomial and let $d_1,d_2 \geq \deg(p)$ be any two
integers. Then $f^*_i(p,d_1)=f^*_i(p,d_2)$ for all
$0\leq i \leq \min(d_1,d_2)$. This statement is false for $h^*$.
Despite this difference, we are going to suppress $p$ and $d$ in our
notation for both $f^*$ and $h^*$ whenever it is clear from context
which $p$ and $d$ are meant.

Now that we have defined the $f^*$- and $h^*$-vectors of polynomials, we
can define $f^*$- and $h^*$-vectors of polytopes (and more generally
polytopal complexes) via the Ehrhart function.

Let $K$ denote a polytopal complex. Then the $f^*$- and $h^*$-vectors of
$K$ are, respectively, defined by
\begin{eqnarray*}
f^*(K,d) &=& f^*(L_K,d) \\
h^*(K,d) &=& h^*(L_K,d),
\end{eqnarray*}
where $d\geq \text{dim}(K)$.

If we do not specify $d$ explicitly, it is understood that
$d=\text{dim}(K)$, that is, $f^*(K)=f^*(K,\text{dim}(K))$ and
$h^*(K)=h^*(K,\text{dim}(K))$.

\subsection{Generating function point of view}

Classically, the $h^*$-vector is defined in terms of generating
functions.

\begin{proposition}[c.f.~Lemma~3.14 in \cite{Beck2007}]
 If $p$ is a polynomial of degree at
most $d$, then
\[\frac{h^*_0z^0+\ldots+h^*_dz^d}{(1-z)^{d+1}} = \sum_{k\geq 0} p(k) z^k.\]
\end{proposition}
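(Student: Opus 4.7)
The plan is to reduce the identity to a single well-known generating function, namely
\[
\sum_{k\geq 0}\binom{k+d}{d}z^k=\frac{1}{(1-z)^{d+1}},
\]
which can itself be proved by differentiating the geometric series $d$ times, or by the negative binomial theorem, and which I would simply cite from \cite{Beck2007} as a standard fact.

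The first step is to verify the shifted version
\[
\sum_{k\geq 0}\binom{k+d-i}{d}z^k=\frac{z^i}{(1-z)^{d+1}}
\]
for each $0\le i\le d$. The key observation is that the polynomial $\binom{k+d-i}{d}=\tfrac{1}{d!}(k+d-i)(k+d-i-1)\cdots(k+1-i)$, viewed as a polynomial in $k$, vanishes at $k=0,1,\ldots,i-1$, because for each such $k$ one of the $d$ linear factors equals zero (since $i\le d$ and $i-1<d-(d-i)$). Therefore the series is unchanged if the sum is started at $k=i$; after the substitution $j=k-i$ it becomes $z^i\sum_{j\ge 0}\binom{j+d}{d}z^j$, which equals $z^i/(1-z)^{d+1}$ by the standard identity.

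The second step is simply to apply linearity. By the definition of the $h^*$-coefficients, $p(k)=\sum_{i=0}^d h^*_i\binom{k+d-i}{d}$ for all $k$, and in particular for every non-negative integer $k$. Interchanging the finite sum over $i$ with the formal sum over $k$ (legitimate because the $i$-sum is finite) gives
\[
\sum_{k\ge 0}p(k)z^k=\sum_{i=0}^d h^*_i\sum_{k\ge 0}\binom{k+d-i}{d}z^k=\sum_{i=0}^d h^*_i\,\frac{z^i}{(1-z)^{d+1}}=\frac{h^*_0+h^*_1 z+\cdots+h^*_d z^d}{(1-z)^{d+1}},
\]
which is the claimed identity in $\mathbb{Z}[h^*_0,\ldots,h^*_d][\![z]\!]$.

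There is no real obstacle here; the only point that needs a moment's care is the vanishing of $\binom{k+d-i}{d}$ for $0\le k<i$, which justifies the index shift without introducing spurious boundary terms. Everything else is a mechanical application of a standard generating function identity together with linearity in the $h^*_i$.
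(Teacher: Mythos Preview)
Your argument is correct. The vanishing of $\binom{k+d-i}{d}$ for $0\le k<i$ follows exactly as you say (the product $(k+d-i)\cdots(k+1-i)$ contains the factor $0$ because $k+1-i\le 0\le k+d-i$ when $0\le k\le i-1\le d-1$), and the rest is linearity. Your parenthetical justification ``$i-1<d-(d-i)$'' is a tautology and could be replaced by the cleaner inequality $k+1-i\le 0\le k+d-i$, but this is cosmetic.

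As for comparison with the paper: the paper does not actually prove this proposition. It is stated with a citation to Lemma~3.14 of \cite{Beck2007} and no proof is given; only the companion proposition for the $f^*$-vector receives an argument in the text. So there is nothing to compare against, and your self-contained derivation is a perfectly good substitute for the citation.
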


A similar statement can be made about the $f^*$-vector.

\begin{proposition} 
If $p$ is a polynomial of degree at most $d$, then
\[\frac{f^*_0z^1}{(1-z)^1} + \cdots + \frac{f^*_dz^{d+1}}{(1-z)^{d+1}} = \sum_{k\geq0} p(k)z^k.\]
\end{proposition}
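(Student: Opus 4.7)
The plan is to reduce the claim to the elementary generating function identity
\[
\sum_{k \geq 1} \binom{k-1}{i} z^k \;=\; \frac{z^{i+1}}{(1-z)^{i+1}},
\]
valid for every integer $i \geq 0$. Once this is in hand, the proposition follows at once by starting from the defining expansion $p(k) = \sum_{i=0}^d f^*_i \binom{k-1}{i}$, substituting into the generating function $\sum_k p(k)z^k$, and interchanging the finite sum over $i$ with the formal sum over $k$, which distributes the identity across the $d+1$ terms on the right-hand side.

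To verify the identity, I would shift the summation index via $m = k-1$ to rewrite the left-hand side as $z\sum_{m \geq 0} \binom{m}{i} z^m$. Because $\binom{m}{i} = 0$ for $0\leq m < i$ (the polynomial evaluation vanishes on these small integers), the sum effectively begins at $m = i$; a further reindexing with $n = m-i$ then yields
\[
z^{i+1}\sum_{n \geq 0}\binom{n+i}{i}z^n \;=\; \frac{z^{i+1}}{(1-z)^{i+1}},
\]
where the last step uses the classical expansion $\sum_{n \geq 0} \binom{n+i}{i} z^n = 1/(1-z)^{i+1}$. This is precisely the same workhorse expansion that drives the companion proposition for $h^*$-vectors, so the $f^*$-version is really a parallel development.

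One piece of bookkeeping concerns the lower index of the outer sum. The identity above naturally reflects the fact that the Ehrhart function $L_X(k)$ is defined for $k \geq 1$, so the equality in the proposition should be read with $\sum_{k \geq 1}$ (or, equivalently, restricted to polynomials satisfying $p(0) = 0$), since the right-hand side has no constant term while for a general polynomial one has $p(0) = \sum_i (-1)^i f^*_i$. Beyond this minor convention point no real obstacle is anticipated; the entire argument is a routine formal power series manipulation, structurally analogous to the proof of the $h^*$ statement.
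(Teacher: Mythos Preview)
Your proposal is correct and follows essentially the same route as the paper: both arguments reduce the claim to the single-term identity $\sum_{k}\binom{k-1}{j}z^k = z^{j+1}/(1-z)^{j+1}$, derived from the standard expansion $\sum_{n\geq 0}\binom{n+j}{j}z^n = 1/(1-z)^{j+1}$, and then conclude by linearity from the defining expansion $p(k)=\sum_i f^*_i\binom{k-1}{i}$. Your observation about the $k=0$ term is a valid point that the paper does not address; indeed $\binom{-1}{j}=(-1)^j\neq 0$, so the stated identity with $\sum_{k\geq 0}$ holds only when $p(0)=0$ (equivalently, when read as $\sum_{k\geq 1}$).
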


\begin{proof} 
The coefficient of $z^k$ in the Laurent expansion of
$\frac{1}{(1-z)^{j+1}}$ is precisely
$L_{\Delta^j}(k)={ k + j \choose j}$, the number of lattice points in
the $k$-th dilate of a $j$-dimensional unimodular simplex. Thus
\[ \frac{z^{j+1}}{(1-z)^{j+1}} = \sum_{k \geq 0} {k - 1\choose j} \;z^k\]
which yields the desired identity. 
\end{proof}

\begin{corollary} The $f^*$- and $h^*$-vectors of a polynomial $p$ satisfy
\[ h_0^*z^0+\ldots+ h^*_dz^d = \sum_{j=0}^d f_j^*z^{j+1}(1-z)^{d-j}. \]
\end{corollary}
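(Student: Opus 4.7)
The plan is to combine the two preceding propositions directly. Both give expressions for the generating function $\sum_{k\geq 0} p(k)z^k$, one in terms of the $h^*$-coefficients and the other in terms of the $f^*$-coefficients. Setting these two expressions equal produces the identity
\[
\frac{h^*_0 + h^*_1 z + \cdots + h^*_d z^d}{(1-z)^{d+1}} \;=\; \sum_{j=0}^d \frac{f^*_j\, z^{j+1}}{(1-z)^{j+1}}.
\]

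From here the argument is purely algebraic: I clear denominators by multiplying both sides by $(1-z)^{d+1}$. On the left the denominator cancels, leaving $h^*_0 + \cdots + h^*_d z^d$. On the right, the $j$-th summand picks up a factor of $(1-z)^{d+1}/(1-z)^{j+1} = (1-z)^{d-j}$, yielding $\sum_{j=0}^d f^*_j z^{j+1}(1-z)^{d-j}$, which is exactly the claimed identity.

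There is essentially no technical obstacle here; the only thing to check is that the exponent bookkeeping is correct, namely that $d+1-(j+1) = d-j \geq 0$ for $0 \leq j \leq d$, so we never divide by $(1-z)$ after clearing denominators. The identity is then an equality of polynomials in $z$ (not merely of rational functions), as both sides have degree at most $d$ and are obtained from the same generating function.
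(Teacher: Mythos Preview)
Your proof is correct and matches the paper's approach exactly: the corollary is stated without an explicit proof, since it follows immediately from equating the two generating-function expressions in the preceding propositions and multiplying through by $(1-z)^{d+1}$. One harmless slip in your last sentence: the right-hand side has degree at most $d+1$, not $d$ (each summand $f^*_j z^{j+1}(1-z)^{d-j}$ has degree $d+1$), but this does not affect the argument, since both sides are polynomials that agree as rational functions and hence agree as polynomials.
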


\subsection{Counting interpretation for the $h^*$-vector}

Given linearly independent integer vectors
$a_1,\ldots,a_n\subset\mathbb{Z}^d$ we define the \emph{cone} over the
$a_i$ by
\[\cone_{\mathbb{R}}(a_1,\ldots,a_n) = \left\{ x\in\mathbb{R}^d \;\middle|\; x=\sum_{i=1}^n \lambda_i a_i, 0\leq \lambda_i\in\mathbb{R} \right\}.\]

Instead of allowing real coefficients $\lambda_i$, we can also restrict
ourselves to integral coefficients. In this way, we obtain the
\emph{discrete cone} over the $a_i$ which is
\[\cone_{\mathbb{Z}}(a_1,\ldots,a_n) = \left\{ x\in\mathbb{R}^d \;\middle|\; x=\sum_{i=1}^n \lambda_i a_i, 0\leq \lambda_i\in\mathbb{Z} \right\}.\]

The \emph{fundamental parallelepiped} $\Pi(a_1,\ldots,a_n)$ of the cone
is
\[\Pi(a_1,\ldots,a_n) = \left\{ x\in\mathbb{R}^d \;\middle|\; x=\sum_{i=1}^n \lambda_i a_i, 0\leq \lambda_i <0, \lambda_i\in\mathbb{R} \right\}.\]

The crucial property of the fundamental parallelepiped is that it tiles
the cone. That is, the cone can be written of as the disjoint union of
integral translates of the parallelepiped, where the translation vectors
are precisely the elements of the discrete cone. In terms of the
Minkowski sum, this can be written simply as:
\[\cone_{\mathbb{R}} (a_1,\ldots,a_n)=\cone_{\mathbb{Z}}(a_1,\ldots,a_n) + \Pi(a_1,\ldots,a_n).\]

In particular
\begin{eqnarray}
\label{eqn:ehrhart-decomposition}
\mathbb{Z}^d \cap \cone_{\mathbb{R}} (a_1,\ldots,a_n)=\cone_{\mathbb{Z}}(a_1,\ldots,a_n) + (\mathbb{Z}^d \cap \Pi(a_1,\ldots,a_n)).
\end{eqnarray}

This can be phrased in terms of multivariate generating functions. Consider the ring of generating functions in the variables $z_1,\ldots,z_d$ and write $z^x=z_1^{x_1}\cdot\ldots\cdot z_d^{x_d}$ for any integer point $x\in\ZZ^d$. Then
\begin{eqnarray}
\label{eqn:cone-and-parallelepiped}
\sum_{x\in \mathbb{Z}^d\cap\cone_{\mathbb{R}}(a_1,\ldots,a_n)} z^x = \frac{\sum_{x\in\mathbb{Z}^d\cap\Pi(a_1,\ldots,a_n)} z^x}{(1-z^{a_1})\cdot\ldots\cdot(1-z^{a_n})}
\end{eqnarray}
since $\frac{1}{(1-z^{a_1})\cdot\ldots\cdot(1-z^{a_n})}$ is the
multivariate generating function of
$\cone_{\mathbb{Z}^d}(a_1,\ldots,a_n)$. Note that the numerator is
a finite sum, so that if all $a_i$ are non-negative, the numerator is in
fact a polynomial.

Now, let $n<d$ be integers and let $v_1,\ldots,v_n\in\mathbb{Z}^{d-1}_{\geq 0}$ denote the
vertices of an integral simplex $\Delta$ in $\mathbb{R}^{d-1}$. By
embedding $\Delta$ into $\mathbb{R}^d$ at height $x_{d}=1$, we pass to
the vectors $a_1,\ldots,a_n$ with $a_i=(v_{i,1},\ldots,v_{i,{d-1}},1)$
and
\[\#\mathbb{Z}^{d-1}\cap k\Delta = \#\mathbb{Z}^d\cap\cone_{\mathbb{R}}(a_1,\ldots,a_n) \cap \{x\in\mathbb{R}^d \;|\; x_d=k\}\]
which, expressed in terms of generating functions, reads
\begin{eqnarray}
\label{eqn:ehrhart-and-cone}
\sum_{k\geq 0} \sum_{x\in\mathbb{Z}^{d-1}\cap k\Delta} z_1^{x_1}\cdots z^{x_{d-1}}_{d-1}z_d^k = \sum_{x\in\mathbb{Z}^{d}\cap\cone_{\mathbb{R}}(a_1,\ldots,a_n)} z^x.
\end{eqnarray}

Combining identities (\ref{eqn:cone-and-parallelepiped}) and (\ref{eqn:ehrhart-and-cone}), substituting 1 for $z_1,\ldots,z_{d-1}$
and substituting $z$ for $z_d$ we obtain
\[\sum_{k\geq 0} L_{\Delta}(k) z^k = \frac{\sum_{i=0}^{n-1} h_i^* z^i}{(1-z)^n}\]
where $h^*_i$ is the number of lattice points $x\in\Pi(a_1,\ldots,a_n)$
with $x_d=i$.

This completes the proof of Ehrhart's classic interpretation of the
$h^*$-vector.

\begin{theorem}[Ehrhart \cite{Ehrhart1962,EugeneEhrhart1977}] 
Let $v_1,\ldots,v_{n+1}\in\mathbb{Z}^d$
be linearly independent and let $a_i=(v_{i,1},\ldots,v_{i,d},1)$ for
$i=1,\ldots,{n+1}$. Let $h^*=(h^*_0,\ldots,h^*_n)$ denote the
$h^*$-vector of the $n$-dimensional simplex
$\Delta=\conv(v_1,\ldots,v_{n+1})$. Then
\[h^*_i=\#\mathbb{Z}^{d+1}\cap\Pi(a_1,\ldots,a_{n+1})\cap \{x\in\mathbb{R}^{d+1} \;|\; x_{d+1}=i\} .\]
\end{theorem}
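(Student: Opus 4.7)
The plan is to reverse-engineer the argument sketched in the paragraphs immediately preceding the theorem, making the bookkeeping explicit. The only genuine inputs are the tiling identity
\[ \cone_{\R}(a_1,\ldots,a_{n+1}) = \cone_{\Z}(a_1,\ldots,a_{n+1}) + \Pi(a_1,\ldots,a_{n+1}), \]
the corresponding multivariate generating function identity
\[ \sum_{x\in\Z^{d+1}\cap\cone_{\R}(a_1,\ldots,a_{n+1})} z^x \;=\; \frac{\sum_{x\in\Z^{d+1}\cap\Pi(a_1,\ldots,a_{n+1})} z^x}{(1-z^{a_1})\cdots(1-z^{a_{n+1}})}, \]
and the defining identity of the $h^*$-vector from Section~\ref{sec:prelim-f-and-h}.

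First I would homogenize: since $a_i=(v_i,1)$, the hyperplane $\{x_{d+1}=k\}$ intersects $\cone_{\R}(a_1,\ldots,a_{n+1})$ exactly in $k\Delta\times\{k\}$, hence
\[ L_\Delta(k) \;=\; \#\bigl(\Z^{d+1}\cap\cone_{\R}(a_1,\ldots,a_{n+1})\cap\{x_{d+1}=k\}\bigr). \]
Next I would specialize the multivariate generating function identity by setting $z_1=\cdots=z_d=1$ and $z_{d+1}=z$. Under this substitution $z^{a_i}\mapsto z$ (since the last coordinate of each $a_i$ equals $1$), the denominator becomes $(1-z)^{n+1}$, and by the homogenization observation the left-hand side collapses to the Ehrhart series $\sum_{k\geq 0}L_\Delta(k)z^k$. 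The numerator becomes $\sum_{i\geq 0}c_i z^i$, where
\[ c_i \;=\; \#\bigl(\Z^{d+1}\cap\Pi(a_1,\ldots,a_{n+1})\cap\{x_{d+1}=i\}\bigr). \]

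This gives $\sum_{k\geq 0}L_\Delta(k)z^k = \frac{\sum_i c_i z^i}{(1-z)^{n+1}}$. On the other hand, by the generating function description of the $h^*$-vector (Proposition after Section~\ref{sec:prelim-f-and-h}) one has
\[ \sum_{k\geq 0}L_\Delta(k)z^k \;=\; \frac{h^*_0+h^*_1 z+\cdots+h^*_n z^n}{(1-z)^{n+1}}. \]
Comparing numerators yields $c_i=h^*_i$ for $0\le i\le n$, and $c_i=0$ for $i>n$ (the latter because every point of $\Pi$ is a combination $\sum\lambda_j a_j$ with $0\le\lambda_j<1$, forcing $x_{d+1}=\sum\lambda_j<n+1$, so in fact $x_{d+1}\le n$).

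I expect the argument to be entirely routine; the only point that needs a sentence of care is justifying that the specialization $z_1=\cdots=z_d=1$ is legitimate. This is not automatic in the ring of formal Laurent series in $d+1$ variables, but it is justified here because after multiplying through by $(1-z^{a_1})\cdots(1-z^{a_{n+1}})$ both sides become polynomial identities (the numerator is a finite sum since $\Pi$ is bounded and contains only finitely many lattice points), which may then be specialized freely. Everything else is matching coefficients.
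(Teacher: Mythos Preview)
Your proposal is correct and follows exactly the same route as the paper: homogenize, use the tiling/generating-function identity for the cone, specialize $z_1=\cdots=z_d=1$, $z_{d+1}=z$, and match numerators against the defining generating-function identity for $h^*$. The only additions are your explicit justification of the specialization and the bound $x_{d+1}\le n$, both of which the paper leaves implicit.
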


By virtue of the fact that polytpoes are convex, the fact that every integral polytope can be
triangulated and using a clever irrational shifting argument to get rid of lattice
points on lower-dimensional faces \cite{Beck2007a}, this theorem can be extended to general lattice polytopes.

\begin{theorem}[Stanley \cite{Stanley1980}] 
Let $K$ be a $d$-dimensional integral
polytope. Then the $h^*$-vector of $K$ is non-negative.
\end{theorem}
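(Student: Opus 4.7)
The plan is to reduce the general polytope case to the simplex case already handled by Ehrhart's theorem, by writing $K$ as a disjoint union of half-open simplices, each of which contributes non-negative integer $h^*$-coefficients. First I would choose a lattice triangulation $T$ of $K$ into simplices of dimension $d$; no unimodularity assumption is needed, and such a $T$ exists for any lattice polytope (for example, a pulling triangulation on the lattice points of $K$). This gives $K=\bigcup_{\sigma\in T}\sigma$, but the simplices overlap on lower-dimensional faces, so the Ehrhart functions cannot yet be summed.

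Next, I would use the irrational-shifting idea to promote this cover to a disjoint one. Pick a point $q$ in the relative interior of $K$ whose coordinates are $\mathbb{Q}$-linearly independent and generic with respect to every affine hyperplane spanned by vertices of $T$. For each maximal simplex $\sigma\in T$ and each facet $F$ of $\sigma$, declare $F$ to be \emph{open in $\sigma$} if the supporting hyperplane of $F$ separates the interior of $\sigma$ from $q$, and \emph{closed in $\sigma$} otherwise. A short convexity argument shows that each interior facet of $T$ becomes open in exactly one of the two simplices sharing it, while every boundary facet of $K$ stays closed. Writing $\sigma^\circ$ for the corresponding half-open simplex, one obtains
\[
K=\bigsqcup_{\sigma\in T}\sigma^\circ,
\qquad
L_K(k)=\sum_{\sigma\in T}L_{\sigma^\circ}(k).
\]

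For each half-open simplex $\sigma^\circ$ I would upgrade the counting interpretation from the preceding subsection. Let $a_1,\ldots,a_{d+1}$ be the homogenizations of the vertices of $\sigma$. Removing from $\sigma$ the facets that are declared open corresponds to removing from the fundamental parallelepiped $\Pi(a_1,\ldots,a_{d+1})$ the bounding faces $\{\lambda_i=0\}$ indexed by the vertices opposite those facets, yielding a half-open parallelepiped $\Pi^\circ$. The tiling identity $\cone_\RR(a_1,\ldots,a_{d+1})=\cone_\Z(a_1,\ldots,a_{d+1})+\Pi^\circ$ for the half-open cone carries through verbatim, and repeating the generating-function argument that produced (\ref{eqn:cone-and-parallelepiped}) gives
\[
\sum_{k\geq 0}L_{\sigma^\circ}(k)\,z^k=\frac{\sum_{i=0}^{d}h^{*,\sigma^\circ}_i\,z^i}{(1-z)^{d+1}},
\]
where $h^{*,\sigma^\circ}_i$ is the number of lattice points of $\Pi^\circ$ at height $i$ in the last coordinate. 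In particular each $h^{*,\sigma^\circ}_i$ is a non-negative integer.

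Summing over $\sigma\in T$ and comparing coefficients gives $h^*_i(K)=\sum_{\sigma\in T}h^{*,\sigma^\circ}_i\in\Z_{\geq 0}$, which is the desired conclusion. I expect the main obstacle to be the half-open decomposition step: one must verify that the ``visibility from $q$'' rule consistently assigns each facet of $T$ to exactly one of the two adjacent simplices and that the resulting half-open pieces genuinely partition $K$ set-theoretically, including along all lower-dimensional faces. Genericity of $q$ is precisely what makes this combinatorial bookkeeping work; once it is in place, the simplex-level Ehrhart calculation transfers without further difficulty.
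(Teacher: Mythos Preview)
Your proposal is correct and follows essentially the same approach as the paper, which only sketches the argument in one sentence: triangulate the polytope, then use a genericity device to remove shared faces so that the half-open simplices partition $K$, and finally apply the parallelepiped counting interpretation to each piece. The paper phrases the genericity step as an ``irrational shifting argument'' (citing \cite{Beck2007a}), while you implement it via visibility from a generic interior point $q$; these are two standard and essentially equivalent ways to produce the same half-open decomposition, and your identification of the partition verification as the only nontrivial bookkeeping is accurate.
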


Our goal is now to obtain a similar counting interpretation, and, in
particular, a similar non-negativity result for the $f^*$-vector of
polytopal complexes. Before we come to this, we present examples of
polytopal \emph{complexes} where the $h^*$-vector has negative entries.

\subsection{$h^*$-vectors with negative entries}
\label{sec:prelim-col-comp}

Stanley's theorem tells us that in order to find $h^*$-vectors with
negative entries we have to look outside the class of integral
polytopes. We are going to consider integral polytopal complexes
instead.

Coloring complexes of graphs are a class of simplicial complexes that
have been studied by a number of authors in recent years, see, e.g.,  \cite{Breuer2000,Crown2009,Hersh2007,Hultman2007,Jonsson2005,Steingrimsson2001}. All coloring complexes of graphs have a
non-negative $h^*$-vector. A natural generalization are coloring
complexes of hypergraphs. For details about this notion, we refer the
interested reader to \cite{Breuer2000}.

A \emph{hypergraph} $H$ is a finite set $V$ of vertices, together with a
set $E$ of \emph{edges}. An edge is a set of vertices of cardinality at
least two. A proper coloring of $H$ is a labeling $c$ of the vertices of
$H$ with the property that every edge $e\in E$ contains at least two
vertices $i,j\in e$ that have a different color $c_i \not= c_j$. Let $S$
be the set of all vectors in $x\in\{0,1\}^{|V|}$ that are not equal
to the all-one and all-zero vectors. We can now define the simplicial
complex $K$ which is called the \emph{coloring complex} of $H$ as
follows. $\sigma$ is a face of $K$ if and only if 1) $\sigma\subset S$, 2) for
any two vertices $x,y\in\sigma$ we have $x\leq y$ or $y\leq x$
componentwise and 3) there exists an edge $e\in E$ such that for all
vertices $x\in\sigma$ and all $i,j\in e$ we have $x_i=x_j$. Notice that
an element of $x\in S$ appears as a vertex of $K$ if and only if $x$,
viewed as a coloring of the vertices of $H$ with exactly two colors $0$
and $1$, is an improper coloring.

As an example, we consider the hypergraph $H$ on vertex set
$\{1,\ldots,10\}$ with edges $\{1,2,3,4,5,6\}$, $\{4,5,6,7,8,9\}$ and
$\{1,2,3,7,8,9\}$. The associated coloring complex $K$ is 3-dimensional.
It consists of three 3-dimensional spheres $S_1,S_2,S_3$ that share a
single 0-dimensional subsphere $S'$. The spheres $S_i$ are simplicial
complexes which can also be obtained by taking the boundary complex of
the 5-dimensional cube $[0,1]^5$ triangulated by the braid arrangement
and removing the all-zero and all-one vertices (and all incident faces).
Then, the $h^*$-vector of $K$ is
\begin{eqnarray*}
h^*(K,3) & = & h^*(S_1,3) + h^*(S_2,3) + h^*(S_3,3) - 2h^*(S',3) \\
&=& 3\cdot(0,30,60,30) - 2\cdot (2,-6,6,-2) \\
&=& (-4,102,168,94)
\end{eqnarray*}
which has a negative entry.

Intuitively speaking, the reason for the negative entry is that the
complex consists of spheres that have an intersection of codimension
strictly greater than 1. Further examples of hypergraph coloring
complexes with negative entries in their $h^*$-vector can be constructed
in this way.

\section{Parititoning a simplicial cone into discrete cones}
\label{sec:partition}

As we have seen, (\ref{eqn:ehrhart-decomposition}) gives a partition of the set of lattice points in $\cone_{\mathbb{R}}(a_1,\ldots,a_n)$ into discrete cones. This partition is ideally suited for the analysis of the $h^*$-vector. To get our hands onto the $f^*$-vector, however, we need a different partition, given in Theorem~\ref{thm:partition}, which we are going to develop in this section. Theorem~\ref{thm:partition} is the main technical result of this article, as the counting results in subsequent sections can be derived from Theorem~\ref{thm:partition} in a straightforward fashion.

In order to prove this partition result, we first need a couple of
definitions. The basic idea is illustrated in Figure~\ref{fig:cone}.

For every real number $x$ there exist an integer
$\integ(x)\in\mathbb{Z}$ and a real number $\fract(x)\in(0,1]$
such that \[x = \integ(x) + \fract(x).\]

Note that if $x$ is not an integer then
$\integ(x) = \lfloor x \rfloor$ and
$\fract(x) = x - \lfloor x \rfloor$. But if $x\in\mathbb{Z}$, then
$\integ(x) = \lfloor x \rfloor + 1$ and
$\fract(x) = x - \lfloor x \rfloor - 1$. So we call $\integ(x)$
and $\fract(x)$ the \emph{skew integral} and \emph{skew fractional}
part of $x$, respectively. If $v\in\mathbb{R}^d$ is a vector, we use
$\integ(v)$ and $\fract(v)$ to genote the vector of skew
integral and skew fractional parts of the components of $v$,
respectively.

Given linearly independent integer vectors
$v_1,\ldots,v_d\in\mathbb{Z}^d$, we define the \emph{fundamental
simplex} $\Delta(v_1,\ldots,v_d)$ generated by these vectors by
\[\Delta(v_1,\ldots,v_d)= \left\{\sum_{i=1}^d\lambda_i v_i \;\middle|\; 0\leq\lambda_i\in\mathbb{R}, \sum_{i=1}^d\lambda_i \leq d \right\}.\]

The \emph{half-open fundamental simplex} is
\[\Delta^\circ(v_1,\ldots,v_d)=\left\{\sum_{i=1}^d\lambda_i v_i \; \middle|\; 0<\lambda_i\in\mathbb{R}, \sum_{i=1}^d\lambda_i \leq d \right\}.\]

We say a point $z\in\cone_{\mathbb{R}}(v_1,\ldots,v_d)$ is
\emph{at level $k$} if $z=\sum_{i=1}^d\lambda_i v_i$ with
$k-1<\sum_{i=1}^d\lambda_i \leq k$ and define $k=\lev(z)$ to be the \emph{level} of $z$. We denote by
$\Lev(k)$ the set of all lattice points in
$\Delta^\circ(v_1,\ldots,v_d)$ at level $k$.

We now define sets $T_1,\ldots,T_d$ with the property that $T_i\subset \Lev(i)$. The
definition is inductive:
\begin{eqnarray*}
T_1  &=& \Lev(1), \\
T_k &=& \Lev(k) \setminus \left( \bigcup_{i=1}^{k-1} \bigcup_{z\in T_i} z+\cone_{\mathbb{Z}}(v_1,\ldots,v_i)\right).
\end{eqnarray*}

We call the lattice pionts in $\bigcup_{i=1}^\infty T_i$ \emph{atomic}.
If $z=\sum_{i=1}^d\lambda_i v_i =: V\lambda$ for some atomic $z$, then
we also call $\lambda$ atomic. If furthermore $x=\sum_{i=1}^d\mu_i v_i$
then note that for all $k$ we have 
\[ x\in z + \cone_{\mathbb{Z}}(v_1,\ldots,v_k) \;\; \text{ if
and only if } \;\; \mu \in \lambda  + \cone_{\mathbb{Z}}(e_1,\ldots,e_k),\]
where the $e_i$ denote the standard unit vectors.

Similar to our definition of $\lev(z)$, we write $\lev(\lambda)$ to denote the level of $\lambda$, i.e.,
$\lev(\lambda)$ is the unique integer such that
$\lev(\lambda)-1<\sum_{i=1}^d \lambda_i \leq \lev(\lambda)$.

We write $\deg(\lambda)$ to denote the \emph{degree} of $\lambda$:
If there exists an index $1\leq j \leq d$ such that $\lambda_j > 1$,
then $\deg(\lambda)$ is defined to be the smallest such index. If
there is no such index, we let $\deg(\lambda):=d+1$.

So $\lambda\in\mathbb{R}^d_{> 0}$ is atomic if and only if $V\lambda$ is
integer and there does not exist a $\mu\in\mathbb{R}^d_{> 0}$ such that
$V\mu$ is integer, $\mu\not=\lambda$ and
\[\lambda \in \mu + \cone_{\mathbb{Z}}(e_1,\ldots,e_{\lev(\mu)}).\]

These definitions are illustrated in Figure~\ref{fig:cone}. 

\begin{figure}[ht]
\includegraphics[angle=-90,width=9cm]{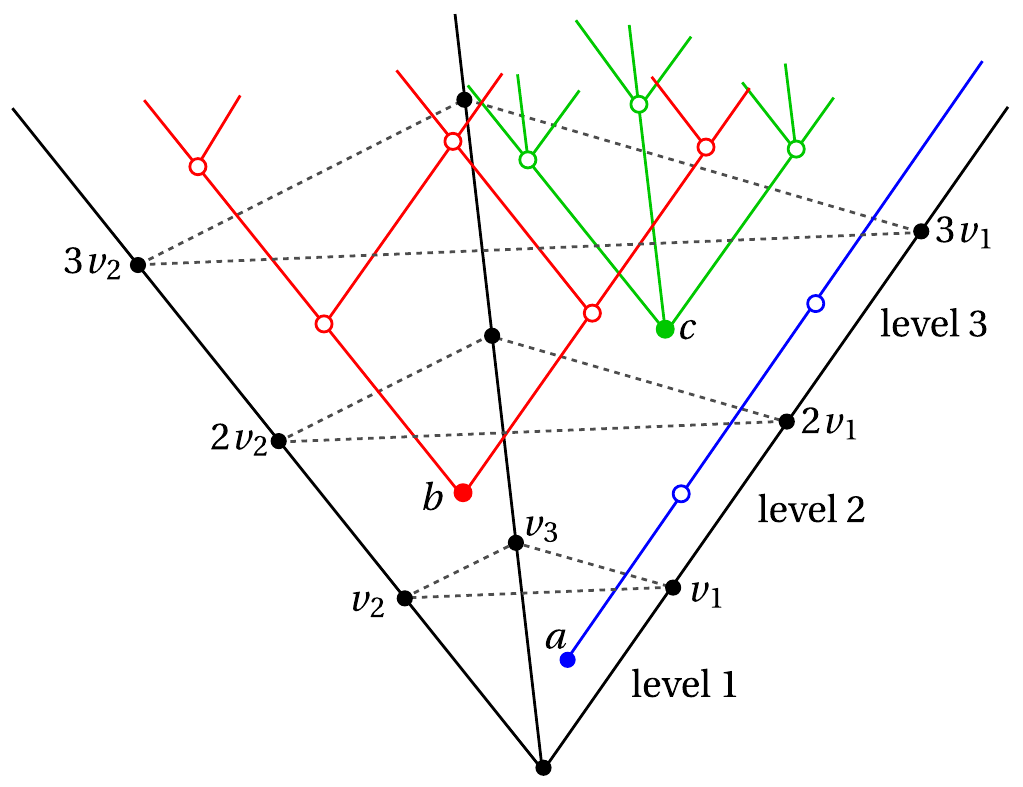}
\caption{\label{fig:cone}This
figure illustrates Theorem~\ref{thm:partition}. (Note that
some lattice points in the cone are not shown here.)
Suppose we have three linearly independent vectors
  $v_1,v_2,v_3\in\RR^3$. The simplex $\conv(3v_1,3v_2,3v_3,0)$ is the
  fundamental simplex of the cone generated by $v_1,v_2,v_3$. The
  half-open fundamental simplex is the intersection of the fundamental simplex with the interior of
  the cone. It is partitioned into three levels as indicated. The first level consists
  of all points below the hyperplane spanned by $v_1,v_2,v_3$,
  including the points on the hyperplane. The second
level contains all points between the hyperplanes spanned by
$v_1,v_2,v_3$ and $2v_1,2v_2,2v_3$, respectively, excluding the points
on the former but including the points on the latter and similarly for
level 3. The set of atomic lattice points in the cone is defined inductively.
In the figure, the solid points in the interior of the cone show
atomic lattice points, whereas the empty circles show lattice points
that are not atomic. All lattice points at level 1 in the open cone are atomic, as
the point labeled $a$ in the figure. Because $a$ is atomic, none of
the points $a+kv_1$ for $1\leq k\in\ZZ$ are atomic. Suppose $b$ is a
lattice point in level 2 that cannot be reached from any atomic
lattice point in level 1 by adding $v_1$. Then $b$ is atomic and all points that can be reached from $b$ by adding $v_1$
or $v_2$ are not atomic. Similarly, for any atomic lattice point $c$ in
level 3, all points that be reached by adding the first three
generators to $c$ are not atomic. There are no atomic lattice points
above level 3. The statement of Theorem~\ref{thm:partition} is that by placing
$i$-dimensional discrete cones at all atomic lattice points in level
$i$ in this way, we obtain a partition of the set of lattice points in
the interior of $\cone_{\mathbb{R}}(v_1,v_2,v_3)$.}
\end{figure}

Despite their inductive definition, it turns out that atomic coefficent
vectors $\lambda$ have a simple characterization.

\begin{lemma}
\label{lem:atomicity} 
Let $z= \sum_{i=1}^n \lambda_i v_i$ be a lattice point
in the interior of $\cone_{\mathbb{R}}(v_1,\ldots,v_d)$.
This means in particular that
$\lambda\in\cone_{\mathbb{R}}(e_1,\ldots,e_d)$. Then:

\begin{enumerate}
\item \label{itm:necessary}
If $\lambda$ is not atomic, then $\deg(\lambda) < \lev(\lambda)$.
\item \label{itm:lev-small}
If $\lev(\lambda) = 1$, then $\deg(\lambda) = d+1$.
\item \label{itm:lev-large}
If $\lev(\lambda)> d$, then $\deg(\lambda) \leq d$.
\item \label{itm:construction}
If $\deg(\lambda) <\lev(\lambda)$, then there exists an atomic $\mu$ such that $\lambda \in \mu+\cone_{\mathbb{Z}}(e_1,\ldots,e_{\lev(\mu)})$.
\item \label{itm:sufficient}
If $\deg(\lambda) < \lev(\lambda)$, then $\lambda$ is not atomic.
\item \label{itm:finite}
 If $\lev(\lambda)>d$, then $\lambda$ is not atomic. In particular, there are only finitely many atomic lattice points.
\end{enumerate}
\end{lemma}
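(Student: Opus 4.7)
The plan is to prove the six statements in the order (2), (3), (1), (4), (5), (6), with (1) and (4) doing the real work. Parts (2) and (3) are immediate pigeonhole observations from $\sum \lambda_i \leq \lev(\lambda)$: when $\lev(\lambda)=1$, every coordinate is at most $1$ and so $\deg(\lambda)=d+1$; when $\lev(\lambda)>d$, some coordinate must exceed $1$ and so $\deg(\lambda)\leq d$.

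For part (1) I plan to argue directly from the inductive definition of $T_k$. If $\lev(\lambda)>d$ the result is part (3). Otherwise $\lambda\in\Lev(\lev(\lambda))$ and ``not atomic'' means $\lambda=\nu+c$ for some atomic $\nu$ at a strictly smaller level $i$ and a nonzero $c\in\cone_{\ZZ}(e_1,\ldots,e_i)$; choosing any coordinate $j\leq i$ with $c_j\geq 1$ and using $\nu_j>0$ gives $\lambda_j>1$, so $\deg(\lambda)\leq j\leq i<\lev(\lambda)$.

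The heart of the argument is (4), where I would give an explicit greedy reduction rather than an induction on $\lev(\lambda)$. Starting from $\lambda_0:=\lambda$, while $\deg(\lambda_t)<\lev(\lambda_t)$ I set $j_t:=\deg(\lambda_t)$ and $\lambda_{t+1}:=\lambda_t-e_{j_t}$. Routine checks confirm that positivity and the integrality of $V\lambda_t$ persist, that $\lev(\lambda_{t+1})=\lev(\lambda_t)-1$, and that $\deg(\lambda_{t+1})\geq j_t$ always: subtracting $e_{j_t}$ either keeps coordinate $j_t$ above $1$ (so $\deg$ is unchanged) or pushes it below (so the new $\deg$ jumps past $j_t$). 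Part (3) forces the process to continue while $\lev>d$, so it must terminate at some $\lambda_N\in\Delta^{\circ}$ with $\deg(\lambda_N)\geq\lev(\lambda_N)$, and the contrapositive of (1) makes $\lambda_N$ atomic. Monotonicity of $(j_t)$ combined with $j_{N-1}\leq\lev(\lambda_{N-1})-1=\lev(\lambda_N)$ shows $j_t\leq\lev(\lambda_N)$ for every $t$, so $\lambda-\lambda_N=\sum_{t=0}^{N-1} e_{j_t}$ lies in $\cone_{\ZZ}(e_1,\ldots,e_{\lev(\lambda_N)})$ as required.

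Parts (5) and (6) follow quickly: the remainder in (4) is nonzero (the loop runs at least one iteration), so $\lambda$ is explicitly excluded from $T_{\lev(\lambda)}$ by the inductive construction, which proves (5); part (6) then combines (3) and (5), and finiteness of the atomic set follows because atomic lattice points all lie in the bounded half-open fundamental simplex. The main obstacle, and the step requiring the most care, will be the index bound $j_{N-1}\leq\lev(\lambda_N)$ in (4); the greedy viewpoint exposes the monotonicity of $\deg$ that makes this bound transparent, whereas a naive induction on $\lev(\lambda)$ loses this structure and cannot keep the remainder inside the correct truncated orthant.
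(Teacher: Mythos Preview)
Your proposal is correct and is essentially the paper's own proof. Your greedy reduction in (4) is exactly the recursive construction the paper uses (indexed by step count rather than by level), and your monotonicity argument for the index bound $j_{N-1}\leq\lev(\lambda_N)$ is the paper's as well; the only cosmetic difference is that you prove (2) and (3) first and invoke (3) to dispose of the case $\lev(\lambda)>d$ in part (1), a case the paper treats implicitly.
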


So in particular we have the following \textbf{characterization of atomicity}:
\begin{quote} 
$\lambda$ is atomic if and only if $\lambda_j \leq 1$ for all indices $j < \lev(\lambda)$.
\end{quote}
Or equivalently:
\begin{quote} 
$\lambda$ is atomic if and only if $\deg(\lambda)\geq \lev(\lambda)$.
\end{quote}

\begin{proof}[Proof of Lemma~\ref{lem:atomicity}.] We proceed in several steps.

\medskip\noindent\emph{Part (\ref{itm:necessary}): If $\lambda$ is not atomic, then
$\deg(\lambda)<\lev(\lambda)$.}\smallskip

We have to show that there exists an index $j < \lev(\lambda)$
such that $\lambda_j> 1$. If $\lambda$ is not atomic, then there exists
an atomic $\mu$ with $\lev(\mu) < \lev(\lambda)$ such that
$\lambda\in \mu + \cone_{\mathbb{Z}}(e_1,\ldots,e_{\lev(\mu)})$,
i.e., there exists a non-negative integral vector
$\delta \in \mathbb{Z}^d_{\geq 0}\setminus\{0\}$ such that
$\lambda=\mu +\delta$ with $\delta_i=0$ for all $i>\lev(\mu)$. As
$\lambda \not= \mu$, $\delta_j\geq 1$ for some $j\leq \lev(\mu)$.
Thus $\lambda_j > \delta_j =1$ and
$\deg(\lambda) \leq j\leq \lev(\mu) < \lev(\lambda)$
as desired.

\medskip\noindent\emph{Part (\ref{itm:lev-small}): If $\lev(\lambda) = 1$, then
$\deg(\lambda) = d+1$.}\smallskip

We have $0<\sum_i \lambda_i \leq 1$ and $\lambda_i > 0$ for all $i$. Thus
$\lambda_i \leq 1$ for all $i$.

\medskip\noindent\emph{Part (\ref{itm:lev-large}): If $\lev(\lambda)> d$, then
$\deg(\lambda) \leq d$.}\smallskip

If $\sum_i \lambda_i > d$, then, by the pigeonhole principle, there is
an $i$ such that $\lambda_i > 1$ whence
$\deg(\lambda)\leq i \leq d$.

\medskip\noindent\emph{Part (\ref{itm:construction}): If $\deg(\lambda) <\lev(\lambda)$, then there
exists an atomic $\mu\not= \lambda$ such that
$\lambda \in \mu+\cone_{\mathbb{Z}}(e_1,\ldots,e_{\lev(\mu)})$.}\smallskip

Let $\lev(\lambda)=l$. Let
$\lambda^l,\lambda^{l-1},\ldots,\lambda^k$ be a sequence of coefficient
vectors with $\lev(\lambda^i)=i$ constructed recursively as
follows. We start with $\lambda^l=\lambda$. Given $\lambda^i$, we
distinguish two cases.

\begin{enumerate}[i.]
\item
  If $\deg(\lambda^i) < \lev(\lambda^i)$, then we define the
  next element in our sequence as
  $\lambda^{i-1}=\lambda^{i}- e_{\deg(\lambda^i)}$. In this case,
  $\lev(\lambda^{i-1})=\lev(\lambda^{i})-1=i-1$ and
  $\deg(\lambda^{i-1}) \geq \deg(\lambda^{i}).$ Note that $\deg(\lambda^i)\leq d$ by part (\ref{itm:lev-large}).
\item
  If $\deg({\lambda^i}) \geq \lev(\lambda^i)$, then we stop
  and $\lambda^k=\lambda^i$ is the last element of our sequence. Note
  that $k\geq 1$, as $\lev(\lambda)=1$ implies that
  $\deg(\lambda)=d+1$ by part (\ref{itm:lev-small}).
\end{enumerate}

By part (\ref{itm:necessary}), we know that $\lambda^k$ is atomic as
$\deg(\lambda^k)\geq \lev(\lambda^k)$. By construction, we
know that
\[\deg(\lambda^l) \leq \deg(\lambda^{l-1}) \leq \cdots \leq \deg(\lambda^{k+1}) < \lev(\lambda^{k+1}) = \lev(\lambda^k) + 1\]
whence
\[\lambda^l = \lambda ^k + \sum_{i=k+1}^l e_{\deg(\lambda^i)}\]
where $\deg(\lambda^i)\leq \lev(\lambda^k)$ for all $i=k+1,\ldots,l$ and so
\[\lambda = \lambda^k + \cone_{\mathbb{Z}}(e_1,\ldots,e_{\lev(\lambda^k)})\]
as desired. Note that $\mu:=\lambda^k\not= \lambda$ as
$\deg(\lambda)<\lev(\lambda)$ but
$\deg(\mu)\geq \lev(\mu)$. Note also that $\mu-\lambda$ is
integral, so that $\sum_i \mu_i v_i$ is a lattice point.

\medskip\noindent\emph{Part (\ref{itm:sufficient}): If $\deg(\lambda) < \lev(\lambda)$, then
$\lambda$ is not atomic.}\smallskip

By part (\ref{itm:construction}), it follows that
$\lambda \in \mu + \cone_{\mathbb{Z}}(e_1,\ldots,e_{\lev(\mu)})$
for $\mu\not=\lambda$, which means that $\lambda$ is not atomic.

\medskip\noindent\emph{Part (\ref{itm:finite}): If $\lev(\lambda)>d$, then $\lambda$ is not atomic. In particular, there are only finitely many atomic lattice points.} \smallskip

If $\lev(\lambda)>d$, then $\lev(\lambda)>\deg(\lambda)$ by part (\ref{itm:lev-large}) and so $\lambda$ is not atomic by part (\ref{itm:sufficient}). Since every level contains only finitely many lattice points, it follows that the total number of atomic lattice points is finite.
\end{proof}

After these preparations, we can now show Theorem~\ref{thm:partition},
the partition theorem at the heart of this article.

\begin{proof}[Proof of Theorem~\ref{thm:partition}] 
First, we note that without loss of generality, we can assume $n=d$. Next, we observe that the right-hand side is contained in the left-hand side of (\ref{eqn:partition}) by construction. So we only have to show that the left-hand side is contained in the right-hand side and that the union is disjoint.

\medskip\noindent\emph{The union is disjoint.}\smallskip

Let $\lambda=\alpha+\delta=\beta+\gamma$ where $\alpha$ and $\beta$ are
atomic,
$\delta\in\cone_{\mathbb{Z}}(e_1,\ldots,e_{\lev(\alpha)})$
and
$\gamma\in\cone_{\mathbb{Z}}(e_1,\ldots,e_{\lev(\beta)})$.
Without loss of generality, we assume that
$\lev(\alpha)\leq\lev(\beta)$.

Note that because $\delta$ and $\gamma$ are integer vectors, $\fract(\lambda)=\fract(\alpha)=\fract(\beta)$
and, as both $\alpha$ and $\beta$ are atomic,
$\alpha_i=\fract(\alpha_i)$ for all $i< \lev(\alpha)$ and
$\beta_i=\fract(\beta_i)$ for all $i< \lev(\beta)$, by the
characterization of atomicity. Furthermore,

\begin{itemize}
\item
  $\alpha_i=\beta_i$ for all $i<\lev(\alpha)$ because at these
  indices both $\alpha$ and $\beta$ are fractional, and
\item
  $\alpha_i=\beta_i$ for all $i>\lev(\beta)$ because at these
  indices $\alpha_i=\beta_i=\lambda_i$ by construction.
\end{itemize}
Now we distinguish two cases.

\medskip\noindent\emph{Case 1: $\lev(\alpha)<\lev(\beta)$.}\smallskip

\begin{itemize}
\item
  $\alpha_i\geq \beta_i$ for
  $\lev(\alpha)\leq i < \lev(\beta)$, because $\beta$ is
  fractional at these indices, and
\item
  $\alpha_{i} \geq \beta_i$ for $i=\lev(\beta)$, because
  $\beta_i\leq \lambda_i =\alpha_i$.
\end{itemize}
So $\alpha\geq \beta$ which implies
$\lev(\alpha)\geq \lev(\beta)$, which gives a contradiction.

\medskip\noindent\emph{Case 2: $\lev(\alpha)=\lev(\beta)$.}\smallskip

In this case, we know $\alpha_j =\beta_j$ for all
$j \not= \lev(\alpha)=\lev(\beta)$. So let
$i=\lev(\alpha)$. First we observe that
\[\lambda_i = \alpha_i +\delta_i = \beta_i +\gamma_i\] where $\delta_i$
and $\gamma_i$ are integers, so that $\alpha_i-\beta_i$ is an integer.
Second we argue that because $\lev(\alpha)=\lev(\beta)$,
\[|\alpha_i -\beta_i| = |\sum_j \alpha_j - \sum_j \beta_j| < 1.\] Taking
these observations togther, we obtain $\alpha_i=\beta_i$ and hence
$\alpha=\beta$, as desired.

\medskip\noindent\emph{The left-hand side of (\ref{eqn:partition}) is contained in the right-hand side.}\smallskip

Let $\lambda$ be the coefficient vector of a lattice point in the cone.
If $\deg(\lambda)\geq \lev(\lambda)$, then $\lambda$ is
atomic and hence contained in the right-hand side. Otherwise
$\deg(\lambda)< \lev(\lambda)$ and thus, by part (\ref{itm:construction}) of Lemma~\ref{lem:atomicity}, there exists an atomic $\mu$ such that
$\lambda = \mu + \cone_{\mathbb{Z}}(e_1,\ldots,e_{\lev(\mu)})$,
which shows that $\lambda$ is contained in the right-hand side as well.
\end{proof}

\section{What $f^*$-vectors count}
\label{sec:fstar-counting}

We now apply the partition theorem from the previous section to obtain
results on the $f^*$-vector of polytopes. We start out with the proof
of Theorem~\ref{thm:counting-interpretation}, the counting
interpretation of the $f^*$-coefficients of a lattice simplex. 

\begin{proof}[Proof of Theorem~\ref{thm:counting-interpretation}] Let the vertices of $\Delta$ be denoted by
$v_1,\ldots,v_{d+1}$. Then the vertices of $\Delta\times\{1\}\subset\RR^{n+1}$ are linearly
independent integer vectors $a_1,\ldots,a_{d+1}\in\mathbb{Z}^{n+1}$ with
$a_i=(v_{i,1},\ldots,v_{i,n},1)$. Let
\[C=\cone_{\mathbb{R}}(a_1,\ldots,a_{d+1})\] denote the cone generated
by the $a_i$. As $\Delta$ is open, the number of lattice points in the
$k$-th dilate of $\Delta$ equals the number of lattice points in the
relative interior of $C$ at height $k$, i.e.,
\begin{eqnarray}
\label{eqn:fstar-count-1}
L_\Delta(k)&=&\#\mathbb{Z}^{n+1}\cap \relint(C) \cap \{x\in\mathbb{R}^{n+1} \;|\; x_{n+1} = k\}.
\end{eqnarray}

By Theorem~\ref{thm:partition}, we know
\begin{eqnarray}
\label{eqn:fstar-count-2}
\relint(\cone_{\mathbb{R}}(a_1,\ldots,a_{d+1})) \cap\mathbb{Z}^{n+1} &=& \bigcup_{z\text{ atomic}} z +\cone_{\mathbb{Z}}(a_1,\ldots,a_{\lev(z)})
\end{eqnarray}
where the union is disjoint and runs over all atomic lattice points
$z\in C$. As all $a_i$ have last coordinate equal to 1, we have for any $1\leq l \leq d+1$
\begin{eqnarray*}
\#\mathbb{Z}^{n+1}\cap \cone_{\mathbb{Z}}(a_1,\ldots,a_{l})\cap \{x\in\mathbb{R}^{n+1} \;|\; x_{n+1} = k\} &=& {k+l-1 \choose l-1}.
\end{eqnarray*}

Translating the discrete cone by a lattice point $z$ at level $l$ now
amounts to shifting this polynomial by $l$. Thus, for any atomic lattice
point $z\in C$,
\begin{eqnarray}
\label{eqn:fstar-count-3}
\#\mathbb{Z}^{n+1}\cap \left( z + \cone_{\mathbb{Z}}(a_1,\ldots,a_{\lev(z)}) \right)\cap \{x\in\mathbb{R}^{n+1} \;|\; x_{n+1} = k\} &=& {k-1 \choose \lev(z)-1},
\end{eqnarray}
as any lattice point $x$ in the set on the left-hand side satisfies
$x=z+y$ where
$y\in\cone_{\mathbb{Z}}(a_1,\ldots,a_{\lev(z)})$ and
$z_{n+1}=\lev(z)$. Combining (\ref{eqn:fstar-count-1}), (\ref{eqn:fstar-count-2}) and (\ref{eqn:fstar-count-3}), we obtain
\[L_\Delta(k) = \sum_{i=0}^d \left( \#\text{ atomic $z\in C$ at level $i+1$ }\right) \cdot {k-1 \choose i}\]
which proves the theorem.
\end{proof}

The previous theorems allow us to prove Theorem~\ref{thm:characterization}, a complete characterization of
$f^*$-vectors of integral partial polytopal complexes.

\begin{proof}[Proof of Theorem~\ref{thm:characterization}] 
First, we argue that every integral partial polytopal
complex has a non-negative $f^*$-vector. Let $K$ be an integral
half-open polytopal complex of dimension $d$. Then the support of $K$
can be written as the disjoint union of relatively open lattice
simplices $\sigma_1,\ldots,\sigma_N$, whence $f^*(K,d)$ is the sum of
all $f^*(\sigma_j,d)$. By Theorem~\ref{thm:counting-interpretation}, $f^*(\sigma_j,d)$ is a
non-negative integer for all $j$ and hence, so is $f^*(K,d)$. Note that this
argument only works because
$f^*_i(\sigma_j,\dim(\sigma_j))=f^*_i(\sigma_j,d)$ for all
$0\leq i\leq \dim(\sigma_j)$, a property the $h^*$-vector does not have.

Next, we argue that every non-negative integral vector
$f^*=(f^*_0,\ldots,f^*_d)$ for some $d$ can be realized as the
$f^*$-vector of some integral half-open polytopal complex $K$. This,
however, is straightforward. Given $f^*$ we let $K$ be a polytopal
complex that is the disjoint union of $f^*_i$ open unimodular
$i$-dimensional lattice simplices for each $i=0,\ldots,d$. By
construction $f^*(K,d)=f^*$, as desired.
\end{proof}

This implies in particular that every integral polytopal complex (and
hence every lattice polytope) has a non-negative integral $f^*$-vector
(Theorem~\ref{thm:non-negative}). The crucial point here is that this
holds even if the polytopal complex does not have a unimodular
triangulation and even if its $h^*$-vector does have negative entries.

\section{The rational case}
\label{sec:rational-case}

Ehrhart's theorem for rational polytopes states that if $P$ is a rational polytope, then $L_P(k)$ is a quasipolynomial in $k$. A
\emph{quasipolynomial} is a function $q(k)$ such that there exists a
number $m$ and polynomials $p_0(k),\ldots,p_{m-1}(k)$ such that
$q(km+l)=p_l(k)$ for all integers $k,l$ with $0\leq l < m$. An $m$ with this property is called a \emph{period} of the quasipolynomial $q$, whereas the minimal $m$ with this property is called the \emph{minimal period} of $q$. Note that any positive integer $m$ such that $mP$ is an integral polytope is a period of $L_P$. The \emph{degree} $\deg(q)$ of a quasipolynomial is the maximum degree of the polynomials $p_i$.

It is possible to define $h^*$-vectors for quasipolynomials and thus for rational polytopes and Stanley's non-negativity theorem also applies in this more general case. \cite{Beck2007,Beck2007a} In this section, we show how the above results for the $f^*$-vector can be generalized to the rational case.

For a given quasipolynomial $q$, a given period $m$ of $q$ and an integer $d\geq \deg(q)$ we define the $f^*$-vector of $q$ by
\begin{eqnarray*} 
f^*(q,d,m) &=& (f^*_0(p_0,d),\ldots,f^*_0(p_{m-1},d), \\ && f^*_1(p_0,d),\ldots,f^*_1(p_{m-1},d), \\ && \ldots, \\ && f^*_d(p_0,d),\ldots,f^*_d(p_{m-1},d)).
\end{eqnarray*}

Note that in this case
$$q(km+l) = \sum_{i=0}^d f^*_i(p_l,d) \cdot {k-1 \choose i},$$
for all $0\leq k,l\in\mathbb{Z}$ with $0\leq l< m$ and $(k,l)\not=(0,0)$.

In analogy to the integral case, we define the $f^*$-vector of a rational polytope $P$ (or more generally a rational partial polytopal complex) to be $f^*(L_P,d,m)$ for a given $d\geq \dim(P)$ and a given period $m$ of $L_P$.

Given these conventions, the proof of
Theorem~\ref{thm:fstar-count-rational} now goes as follows.

\begin{proof}[Proof of Theorem~\ref{thm:fstar-count-rational}]
The proof proceeds just as in the integral case, with the following
differences. Let $v_1,\ldots,v_{d+1}$ denote the vertices of $\Delta$
and define generators $a_1,\ldots,a_{d+1}$ by
$a_i=(mv_{i,1},\ldots,mv_{i,n},m)$. Note that the vectors $a_i$ are
integral by definition of $m$, but they are not primitive\footnote{An
  integral vector $z\in\ZZ^n$ is called \emph{primitive} if the line
  segment from $z$ to the origin contains no lattice point except its
  end points. Equivalently, $z$ is primitive if its components have
  greatest common divisor 1.} in
general! We now consider the fundamental simplex of the cone $C=\cone_{\mathbb{R}}(a_1,\ldots,a_{d+1})$ with respect to these generators and observe that the lattice points $z$ in $C$ at level $i+1$ have last coordinate $z_{n+1}=im+l+1$ for some integral $0\leq l < m$. All lattice points $z\in\cone_{\mathbb{Z}}(a_1,\ldots,a_{d+1})$, however, have last coordinate $z_{n+1}=jm$ for some non-negative integer $j$. The theorem then follows. 
\end{proof}

Theorem~\ref{thm:fstar-count-rational} implies the non-negativity
Theorem~\ref{thm:non-negativity-rational} just as in the integral case.

\begin{proof}[Proof of Theorem~\ref{thm:non-negativity-rational}]
Let $K$ be a rational partial polytopal complex of dimension $d=\dim(P)$. Let $m$ be a positive integer such that $mK$ is integral. Let $T$ be a triangulation of $K$ that uses only vertices of $K$. By Theorem~\ref{thm:fstar-count-rational} all open simplices $\sigma$ in $T$ have an Ehrhart quasipolynomial $L_\sigma$ with period $m$. Thus $$f^*(K,d,m) = \sum_{\sigma \in T} f^*(\sigma,d,m)$$ which shows that $f^*(K,d,m)$ is non-negative and integral.
\end{proof}
 
Interestingly, there is another variant of Theorem~\ref{thm:fstar-count-rational} that makes use of a different grading of the cone over $\Delta$.

We define the \emph{restricted partition function} $p_{m_1,\ldots,m_d}(k)$ of positive integers $m_1, \ldots, m_d, k$ to be the coefficient of $z^k$ in the Laurent expansion of the generating function 
\[ \frac{1}{(1-z)^{m_1}\cdot\ldots\cdot(1-z)^{m_d}}, \]
or, equivalently, \[ F_{m_1,\ldots,m_d}(k) = \# \left\{
(\lambda_1,\ldots,\lambda_d) \; \middle|\; 0\leq\lambda_i\in\ZZ, \sum_{i=1}^d
\lambda_i m_i = k \right\}. \]


\begin{proof}[Proof of Theorem~\ref{thm:fstar-count-rational-skew}]
Theorem~\ref{thm:fstar-count-rational-skew} follows from Theorem~\ref{thm:partition} using the observation that for any lattice point $y$ with $0\leq y_{n+1}=s\in\ZZ$ the value $p_{m_1,\ldots,m_{i+1}}(k-s)$ of the restricted partition function equals the number of lattice points $z$ in $y+\cone_{\mathbb{Z}}(a_1,\ldots,a_{i+1})$ with $z_{n+1} = k$.
\end{proof}

\bigskip

\noindent{\textbf{Acknowledgements.}} I would like to thank Matthias Beck for several helpful dicussions and comments on an early version of this paper.

\bibliographystyle{amsplain}
\bibliography{fvector}

\end{document}